\newtheorem{propo}{{\bf Proposition}}[section]
\newtheorem{coro}[propo]{{\bf Corollary}}
\newtheorem{lemma}[propo]{{\bf Lemma}} \newtheorem{theor}[propo]{{\bf
Theorem}} \newtheorem{ex}{{\sc Example}}[section]
\newenvironment{proof}{{\bf Proof.}}{$\Box$}
\def\N{{\mathbb N}}
\begin{document}

\vspace*{1.0in}

\begin{center}THE GENERALISED NILRADICAL OF A LIE ALGEBRA
\end{center}
\bigskip

\begin{center} DAVID A. TOWERS 
\end{center}
\bigskip

\begin{center} Department of Mathematics and Statistics

Lancaster University

Lancaster LA1 4YF

England

d.towers@lancaster.ac.uk 
\end{center}
\bigskip

\begin{abstract}
A solvable Lie algebra $L$ has the property that its nilradical $N$ contains its own centraliser. This is interesting because gives a representation of $L$ as a subalgebra of the derivation algebra of its nilradical with kernel equal to the centre of $N$. Here we consider several possible generalisations of the nilradical for which this property holds in any Lie algebra. Our main result states that for every Lie algebra $L$, $L/Z(N)$, where $Z(N)$ is the centre of the nilradical of $L$, is isomorphic to Der$(N^*)$ where $N^*$ is an ideal of $L$ such that $N^*/N$ is the socle of a semisimple Lie algebra.

\par 
\noindent {\em Mathematics Subject Classification 2010}: 17B05, 17B20, 17B30, 17B50.
\par
\noindent {\em Key Words and Phrases}: Lie algebras, generalised nilradical, quasi-nilpotent radical, quasi-minimal, quasi-simple, socle, centraliser. 
\end{abstract}

\section{Introduction}
Throughout, $L$ will be a finite-dimensional Lie algebra, over a field $F$, with nilradical $N$ and radical $R$. If $L$ is solvable, then $N$ has the property that $C_L(N) \subseteq N$. This property supplies a representation of $L$ as a subalgebra of Der($N$) with kernel $Z(N)$. The purpose of this paper is to seek a larger ideal for which this property holds in all Lie algebras. The corresponding problem has been considered for groups (see, for example, Aschbacher \cite[Chapter 11]{asch}).  In group theory, the quasi-nilpotent radical (also called by some the generalised Fitting subgroup), $F^*(G)$, of a group $G$ is defined to be $F(G)+E(G)$, where $F(G)$ is the Fitting subgroup and $E(G)$ is the set of {\em components} of $G$: that is,  the quasi-simple subnormal subgroups of the group. It is also equal to the socle of $C_G(F(G))F(G)/F(G)$. The generalised Fitting subgroup, $\tilde{F}(G)$, is defined to be the socle of $G/\Phi(G)$, where $\Phi(G)$ is the Frattini subgroup of $G$ (see, for example, \cite{mv}).  Here we consider various possible analogues for Lie algebras.
\par

First we introduce some notation that will be used. The {\em centre} of $L$ is $Z(L)=\{ x\in L : [x,y]=0$ for all $y \in L\}$; if $S$ is a subalgebra of $L$, the {\em centraliser} of $S$ in $L$ is $C_L(S)=\{x\in L:[x,S] =0\}$; the {\em Frattini ideal}, $\phi(L)$, of $L$ is the largest ideal contained in all of the maximal subalgebras of $L$; we say that $L$ is $\phi$-free if $\phi(L)=0$; the {\em socle} of $S$, Soc\,$S$, is the sum of all of the minimal ideals of $S$; and the {\em $L$-socle} of $S$, Soc$_L\,S$, is the sum of all of the minimal ideals of $L$ contained in $S$. The symbol `$\oplus$' will be used to denote an algebra direct sum, whereas `$\dot{+}$' will denote a direct sum of the vector space structure alone.
\par

We call $L$ {\em quasi-simple} if $L^2 = L$ and $L/Z(L)$ is simple. Of course, over a field of characteristic zero a quasi-simple Lie algebra is simple, but that is not the case over fields of prime characteristic. For example, $A_n$ where $n\equiv -1(modp)$ is quasi-simple, but not simple. This suggests using the quasi-simple subideals of a Lie algebra $L$ to define a corresponding $E(L)$. However, first note that quasi-simple subideals of $L$ are ideals of $L$. This follows from the following easy lemma.

\begin{lemma}\label{l:char} If $I$ is a perfect subideal (that is, $I^2=I$) of $L$ then $I$ is a characteristic ideal of $L$.
\end{lemma}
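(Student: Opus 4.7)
The plan is to induct on the length $n$ of a subideal chain $I = I_0 \triangleleft I_1 \triangleleft \cdots \triangleleft I_n = L$, proving the stronger statement that every perfect subideal is in fact a \emph{characteristic} ideal of $L$, not merely an ideal. The base case $n=0$ is trivial. For the inductive step I would establish two sub-claims in turn: first that $I$ is an ideal of $L$, and then that $I$ is preserved by every derivation of $L$.

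For the first sub-claim, the truncated chain $I \triangleleft I_1 \triangleleft \cdots \triangleleft I_{n-1}$ exhibits $I$ as a perfect subideal of $I_{n-1}$ of length $n-1$, so by the inductive hypothesis $I$ is characteristic in $I_{n-1}$. Since $I_{n-1}$ is itself an ideal of $L$, for each $x \in L$ the restriction $(\text{ad}\,x)|_{I_{n-1}}$ is a genuine derivation of $I_{n-1}$, which therefore maps $I$ into $I$. Hence $[L,I] \subseteq I$, showing that $I \triangleleft L$.

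For the second sub-claim, let $D \in \text{Der}(L)$ be arbitrary. For $x, y \in I$ the Leibniz rule gives $D([x,y]) = [Dx, y] + [x, Dy]$, and both summands lie in $[L,I] \subseteq I$ now that $I$ is known to be an ideal. Because $I$ is perfect, $I = [I,I]$ is spanned by such brackets, so $D(I) \subseteq I$, completing the induction.

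The main point requiring care — and the reason the inductive hypothesis must be strengthened to ``characteristic'' rather than merely ``ideal'' — is that an arbitrary derivation of $L$ need not preserve the intermediate subideal $I_{n-1}$, so a naive induction on ``ideal'' alone will not restrict derivations down the chain. Inner derivations always do preserve $I_{n-1}$, however, which is exactly what is needed to promote the ideal property one level up the chain; perfectness then serves as the engine that upgrades ``ideal of $L$'' to ``characteristic in $L$'' at the top.
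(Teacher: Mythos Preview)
Your proof is correct and follows essentially the same two-step strategy as the paper: first establish that $I$ is an ideal of $L$ via the subideal chain, then use the Leibniz rule together with $I=I^2$ to upgrade to ``characteristic''. The only cosmetic difference is in the first step: where you induct on the chain length (using that $I$ is characteristic in $I_{n-1}$ and that $\mathrm{ad}\,x|_{I_{n-1}}$ is a derivation), the paper unrolls this into the single line $[L,I]=[L,I^n]\subseteq L(\mathrm{ad}\,I)^n\subseteq I$, where $n$ is the subideal depth; your induction is precisely this containment proved one link at a time, and the ``characteristic'' step is verbatim the same.
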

\begin{proof} If $I$ is perfect then $I=I^n$ for all $n \in \N$. It follows that $[L,I]=[L,I^n] \subseteq L$ (ad $I)^n \subseteq I$ for some $n \in \N$, and hence that $I$ is an ideal of $L$.  But now, if $D \in$ Der$(L)$, then $D([x_1,x_2])=[x_1,D(x_2)]+[D(x_1),x_2] \in I$ for all $x_1, x_2 \in I$. Hence $D(I)=D(I^2)\subseteq I$.
\end{proof}
\bigskip

Combining this with the preceding remark we have the following.

\begin{lemma}\label{l:char0} Let $L$ be a Lie algebra over a field of characteristic zero. Then $I$ is a quasi-simple subideal of $L$ if and only if it is a simple ideal of $L$.
\end{lemma}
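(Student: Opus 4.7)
The plan is to deduce both directions by combining Lemma \ref{l:char} with the remark made just before the statement, namely that in characteristic zero every quasi-simple Lie algebra is already simple. The substantive content of the lemma is really packaged into that remark (which rests on Whitehead's second lemma: a perfect central extension of a simple Lie algebra in characteristic zero splits, forcing the centre to be zero), so once this is granted, the lemma should fall out in a few lines.

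For the ``only if'' direction, I would suppose $I$ is a quasi-simple subideal of $L$. By the cited remark, $I$ is simple as an abstract Lie algebra. Being simple, and hence nonabelian, $I^2$ is a nonzero ideal of $I$, so $I^2=I$; that is, $I$ is perfect. Lemma \ref{l:char} then upgrades $I$ from a subideal to a (characteristic) ideal of $L$. Since simplicity is intrinsic to $I$, we conclude that $I$ is a simple ideal of $L$.

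For the ``if'' direction, an ideal is certainly a subideal. If $I$ is simple then, using the usual convention that simple Lie algebras are nonabelian, $Z(I)$ is a proper ideal of $I$ and hence $Z(I)=0$, while $I^2$ is a nonzero ideal of $I$ and hence $I^2=I$. Thus $I/Z(I)=I$ is simple and $I$ is perfect, so $I$ is quasi-simple by definition.

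There is no real obstacle to overcome: the only delicate input is the characteristic-zero hypothesis, which enters solely through the preceding remark and is genuinely needed, as the prime-characteristic counterexample $A_n$ with $n\equiv -1\pmod{p}$ has already been flagged in the paper.
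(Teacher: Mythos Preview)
Your proposal is correct and follows exactly the route the paper intends: it combines Lemma~\ref{l:char} with the preceding remark that quasi-simple Lie algebras in characteristic zero are simple. One small redundancy: since quasi-simple already includes $I^2=I$ by definition, you need not pass through ``simple $\Rightarrow$ perfect'' before invoking Lemma~\ref{l:char}.
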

\medskip

We say that an ideal $A$ of $L$ is {\em quasi-minimal} in $L$ if $A/Z(A)$ is a minimal ideal of $L/Z(A)$ and $A^2=A$. Clearly a quasi-simple ideal is quasi-minimal. Over a field of characteristic zero, an ideal $A$ of $L$ is quasi-minimal if and only if it is simple. So an alternative is to define $E(L)$ to consist of the quasi-minimal ideals of $L$. We investigate these two possibilities in sections $3$ and $5$. 
\par

In sections $4$ and $6$ our attention turns to two further candidates for a generalised nilradical: the $L$-socle of $(N+C_L(N))/N$ and the socle of $L/\phi(L)$. All of these possibilities turn out to be related, but not always equal.

\section{Preliminary results}
Let $L$ be a Lie algebra over a field $F$ and let $U$ be a subalgebra of $L$. If $F$ has characteristic $p>0$ we call $U$ {\em nilregular} if the nilradical of $U$, $N(U)$, has nilpotency class less than $p-1$. If $F$ has characteristic zero we regard every subalgebra of $L$ as being nilregular. We say that $U$ is {\em characteristic in $L$} if it is invariant under all derivations of $L$. Then we have the following result.

\begin{theor}\label{t:nil} \begin{itemize}
\item[(i)] If $I$ is a nilregular ideal of $L$ then $N(I) \subseteq N(L)$.
\item[(ii)] If $I$ is a nilregular subideal of $L$ and every subideal of $L$ containing $I$ is nilregular, then $N(I) \subseteq N(L)$.
\end{itemize}
\end{theor}
\begin{proof}\begin{itemize}
\item[(i)] We have that $N(I)$ is characteristic in $I$. This is well-known in characteristic zero, and is given by \cite[Corollary 1]{mak} in characteristic $p$. Hence it is a nilpotent ideal of $L$ and the result follows.
\item[(ii)] Let $I=I_0 < I_1 < \ldots < I_n=L$ be a chain of subalgebras of $L$ with $I_j$ an ideal of $I_{j+1}$ for $j=0, \ldots, n-1$. Then $N(I) \subseteq N(I_1) \subseteq .... \subseteq N(I_n)=N(L)$, by (i).
\end{itemize}
\end{proof}
\medskip

Similarly, we will call the subalgebra $U$ {\em solregular} if the underlying field $F$ has characteristic zero, or if it has characteristic $p$ and the (solvable) radical of $U$, $R(U)$, has derived length less than $log_2 p$. Then we have the following corresponding result.

\begin{theor}\label{t:solv}  \begin{itemize}
\item[(i)] If $I$ is a solregular ideal of $L$ then $R(I) \subseteq R(L)$.
\item[(ii)] If $I$ is a solregular subideal of $L$ and every subideal of $L$ containing $I$ is solregular, then $R(I) \subseteq R(L)$.
\end{itemize}
\end{theor}
\begin{proof}  This is similar to the proof of Theorem \ref{t:nil}, using \cite[Theorem 2]{pet}.
\end{proof}
\bigskip

We also have the following result which we will improve upon below, but by using a deeper result than is required here.

\begin{theor}\label{t:minp} Let $L$ be a Lie algebra over a field $F$, and let $I$ be a minimal non-abelian ideal of $L$. Then either 
\begin{itemize}
\item[(i)] $I$ is simple or 
\item[(ii)] $F$ has characteristic $p$, $N(I)$  has nilpotency class greater than or equal to $p-1$, and $R(I)$ has derived length greater than  or equal to $log_2 p$.
\end{itemize}
\end{theor}
\begin{proof} Let $I$ be a non-abelian minimal ideal of $L$ and let $J$ be a minimal ideal of $I$. Then $J^2=J$ or $J^2=0$. The former implies that $J$ is an ideal of $L$ by Lemma \ref{l:char}, and hence that $I$ is simple.  So suppose that $J^2=0$. Then $N(I) \neq 0$ and $R(I) \neq 0$. But if $I$ is nilregular we have that $N(I) \subseteq N(L)\cap I=0$, since $I$ is non-abelian, a contradiction. Similarly, if $I$ is solvregular, then $R(I) \subseteq R(L)\cap I=0$, a contradiction. The result follows.
\end{proof}
\bigskip

As a result of the above we will call the subalgebra $U$ {\em regular} if it is either nilregular or solregular; otherwise we say that it is {\em irregular}. 
Then we have the following corollary.

\begin{coro}\label{c:minp} Let $L$ be a Lie algebra over a field $F$. Then every minimal ideal of $L$ is abelian, simple or irregular.
\end{coro}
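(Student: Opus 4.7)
The plan is to derive this as an almost immediate consequence of Theorem \ref{t:minp} together with the definition of \emph{regular} just introduced. Let $I$ be a minimal ideal of $L$ and assume it is neither abelian nor simple; I then need to show it is irregular, i.e.\ neither nilregular nor solregular.

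First I would observe that since $I$ is not abelian, Theorem \ref{t:minp} applies and (i) is ruled out by hypothesis, so we must be in case (ii). That case tells us that $F$ has characteristic $p>0$, that $N(I)$ has nilpotency class at least $p-1$, and that $R(I)$ has derived length at least $\log_2 p$. The first of these conclusions directly negates the definition of nilregular (which demands nilpotency class strictly less than $p-1$), and the second directly negates the definition of solregular (which demands derived length strictly less than $\log_2 p$). Hence $I$ fails to be nilregular and fails to be solregular, and so $I$ is irregular by the definition preceding the corollary.

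The main obstacle here is essentially notational rather than mathematical: one has to be careful that the definition of \emph{regular} adopted in the paragraph before the corollary (``either nilregular or solregular'') matches the way Theorem \ref{t:minp}(ii) is phrased, so that the failure of case (i) really does force both nilirregularity and solirregularity simultaneously. Since Theorem \ref{t:minp} delivers both bounds at once in its conclusion, no additional argument is needed, and the corollary follows in a single line.
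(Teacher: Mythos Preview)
Your proposal is correct and matches the paper's approach exactly: the paper states the corollary without proof, presenting it as an immediate consequence of Theorem~\ref{t:minp} together with the definition of regular/irregular, and you have spelled out precisely that one-line deduction. There is nothing to add or correct.
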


Block's Theorem on differentiably simple rings (see \cite{block}) describes the irregular minimal ideals as follows.

\begin{theor}\label{t:block} Let $L$ be a Lie algebra over a field of characteristic $p>0$ and let $I$ be an irregular minimal ideal of $L$. Then $I \cong  S\otimes {\mathcal O}_n$, where $S$ is simple and ${\mathcal O}_n$ is the truncated polynomial algebra in $n$ indeterminates. Moreover, $N(I)$ has nilpotency class $p-1$ and $R(I)$ has derived length $\lceil log_2 p \rceil$.
\end{theor}
\begin{proof} Every non-abelian minimal ideal $I$ of $L$ is ad$\mid_I(L)$-simple, so the first assertion follows from \cite[Theorem 1]{block}. Now $N(I)=R(I)\cong S\otimes {\mathcal O}_n^+$, where ${\mathcal O}_n^+$ is the augmentation ideal of ${\mathcal O}_n$. It is then straightforward to check that the final assertion holds. 
\end{proof}
\bigskip

Note that if $\mathcal{N}$ and $\mathcal{S}$ are the classes of Lie algebras that are themselves nilregular and solregular respectively, then $\mathcal{N} \not \subseteq \mathcal{S}$ and $\mathcal{S} \not \subseteq \mathcal{N}$, as the following examples show. 

\begin{ex}\label{e:fil} Let $L$ be a filiform nilpotent Lie algebra of dimension $n$ over a field $F$. Then $L$ has nilpotency class $n-1$ and derived length $2$. Thus, if $F$ has characteristic $p>3$, and $n \geq p$, then $L$ has nilpotency class greater than or equal to $p-1$, and so is not nilregular. However, it is solregular, since $2<log_2p$.
\end{ex}

\begin{ex}\label{e:sol} Let $L=Fe_1+Fe_2$ with product $[e_1,e_2]=e_2$ and let $F$ have characteristic $3$. The $N(L)=Fe_2$ has nilpotency class $1<p-1$ and so $L$ is nilregular. But $R(L)=L$, so $L$ has derived length $2>log_2p$ and is not solregular.
\end{ex}
 For every Lie algebra $L^{(n)}\subseteq L^{2^n}$, so any nilregular nilpotent Lie algebra of nilpotency class $2^n$ is solregular, since $2^n<p-1<p$ implies that $n<log_2p$. However, it is not true generally that a nilpotent nilregular Lie algebra is solregular, as the following example shows.

\begin{ex}\label{e:nilp} Let $L$ be the seven-dimensional Lie algebra over a field $F$ of characteristic $p=7$ with basis $e_1, \ldots, e_7$ and products $[e_2,e_1]=e_4$, $[e_3,e_1]=e_5$, $[e_3,e_2]=e_5$, $[e_4,e_3]=-e_6$, $[e_5,e_1]=e_7$, $[e_5,e_2]=2e_6$, $[e_5,e_4]=e_7$, $[e_6,e_1]=e_7$ and $[e_6,e_2]=e_7$ (see \cite[page 87]{bok}). Then $L$ has nilpotency class $5<p-1$ and so is nilregular, but its derived length is $3>log_2p$, so it is not solregular.
\end{ex}

We also have the following result.

\begin{coro}\label{c:equiv} If $L$ is a Lie algebra and $A$ is a regular ideal of $L$, then $A$ is quasi-minimal in $L$ if and only if it is a quasi-simple ideal of $L$.
\end{coro}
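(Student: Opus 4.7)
The forward direction is essentially already noted in the paper: a quasi-simple ideal $A$ satisfies $A^2=A$, and since $A/Z(A)$ is simple it is a minimal ideal of $L/Z(A)$, so $A$ is quasi-minimal. The content is in the converse.

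Assume $A$ is a regular quasi-minimal ideal of $L$. The plan is to apply Corollary \ref{c:minp} to the minimal ideal $A/Z(A)$ of $L/Z(A)$ and rule out the abelian and irregular cases. First I would observe that $A/Z(A)$ is non-abelian: if $[A,A] \subseteq Z(A)$ then $A = A^2 \subseteq Z(A)$, making $A$ abelian, which contradicts $A^2=A$ (for $A\neq 0$). Thus by Corollary \ref{c:minp} applied in $L/Z(A)$, the minimal ideal $A/Z(A)$ is either simple or irregular.

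The main technical step is to show that passage to the quotient $A/Z(A)$ preserves regularity, so that the irregular option is excluded. The key claim is that $N(A/Z(A)) = N(A)/Z(A)$ and $R(A/Z(A)) = R(A)/Z(A)$. One inclusion is clear since $N(A)/Z(A)$ and $R(A)/Z(A)$ are nilpotent (resp.\ solvable) ideals of $A/Z(A)$. For the reverse inclusion, one takes the preimage in $A$ of $N(A/Z(A))$: this is an ideal of $A$ which is an extension of the central (hence nilpotent) ideal $Z(A)$ by a nilpotent algebra, so it is itself nilpotent, hence contained in $N(A)$. The analogous argument for $R$ uses that an extension of an abelian ideal by a solvable algebra is solvable. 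Consequently, the nilpotency class of $N(A/Z(A))$ is at most that of $N(A)$, and the derived length of $R(A/Z(A))$ is at most that of $R(A)$.

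From this it follows immediately that if $A$ is nilregular then so is $A/Z(A)$, and if $A$ is solregular then so is $A/Z(A)$; in either case $A/Z(A)$ is regular. Therefore, by Theorem \ref{t:block}, $A/Z(A)$ cannot be irregular, so it must be simple, and $A$ is quasi-simple. The only subtle point is the preimage argument showing $N(A/Z(A)) = N(A)/Z(A)$ (and the analogue for $R$); once that is in hand, the corollary drops out of Corollary \ref{c:minp}.
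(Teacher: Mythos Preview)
Your proof is correct and is precisely the argument the paper intends: the corollary is stated without proof immediately after Corollary~\ref{c:minp}, and the natural reading is exactly your application of that result to the minimal ideal $A/Z(A)$ of $L/Z(A)$, together with the observation that quotienting by the centre cannot increase the nilpotency class of the nilradical or the derived length of the radical. One minor point: your final appeal to Theorem~\ref{t:block} is superfluous --- once you have shown $A/Z(A)$ is regular, it is by definition not irregular, so Corollary~\ref{c:minp} alone forces $A/Z(A)$ to be simple.
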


However, the above result is not true for all ideals, as the following example shows. 

\begin{ex}\label{e:pasha} Let $L=sl(2)\otimes {\mathcal O}_m+1\otimes {\mathcal D}$, where ${\mathcal O}_m$ is the truncated polynomial algebra in $m$ indeterminates, ${\mathcal D}$ is a non-zero solvable subalgebra of Der(${\mathcal O}_m$), ${\mathcal O}_m$ has no ${\mathcal D}$-invariant ideals, and the ground field is algebraically closed of characteristic $p>5$. Then $L$ is semisimple and  $A=sl(2)\otimes {\mathcal O}_m$ is the unique minimal ideal of $L$ (see \cite[Theorem 6.4]{pasha}). Since $Z(A)=0$, $A$ is clearly quasi-minimal but not quasi-simple.
\end{ex}

If $S$ is a subalgebra of $L$, we denote by $R_c(S)$ the (solvable) characteristic radical of $S$; that is, the sum of all of the solvable characteristic ideals of $L$. (see Seligman \cite{sel}).

\begin{theor}\label{t:rad} Let $L$ be a Lie algebra over any field $F$. Then $R_c(C_L(N))=Z(N)$. Moreover, if $C_L(N)$ is regular, then $R_c(C_L(N))=R(C_L(N))$. 
\end{theor}
\begin{proof} Let $Z=Z(N)$, $\overline{L}=L/Z$ and $H=R_c(C_L(N))$. Then $H$ is a characteristic ideal of $C_L(N)$, and hence an ideal of $L$. Assume that $\overline{H}\neq 0$. Then there exists $k \geq1$ such that $H^{(k+1)} \subseteq Z$ but $X=H^{(k)} \not \subseteq Z$. Then $X^2 \subseteq Z$ and $X^3 \subseteq [N,C_L(N)]=0$, since $X \subseteq C_L(N)$. It follows that $X$ is a nilpotent ideal of $L$, and hence that $X \subseteq N$. But $[X,N]=0$, giving $X\subseteq Z$, a contradiction.
\par

Now suppose that $C_L(N)$ is nilregular. Then, clearly, $R_c(C_L(N)) \subseteq R(C_L(N))$. Suppose that $R(C_L(N)) \neq Z$. Let $A/Z$ be a minimal ideal of $C_L(N)/Z$ with $A\subseteq R(C_L(N))$. Then $A^3=0$ and so $A\subseteq N(C_L(N)) \subseteq N(L)$, by Theorem \ref{t:nil} (i). Hence $A=Z$, a contradiction.  
\par

Finally, suppose that $C_L(N)$ is solregular. Then $R(C_L(N))=R(L)\cap C_L(N)$ is an ideal of $L$, and arguing as in the first paragraph of this proof shows that $R(C_L(N))=Z(N)$.  
\end{proof}
\bigskip

This has the following useful corollary.

\begin{coro}\label{c:cent0} Let $L$ be a Lie algebra over a field $F$, let $N$ be its nilradical and let $C=C_L(N)$ be regular. Then 
\begin{itemize}
\item[(i)] if $\phi(C)\cap Z(N)=0$, $C=Z(N)\dot{+}B$ where $B$ is a semisimple subalgebra of $L$ and $B^2$ is an ideal of $L$;
\item[(ii)] if $\phi(L)\cap Z(N)=0$, $C=Z(N)\oplus B$ where $B$ is a maximal semisimple ideal of $L$; and
\item[(iii)] if $F$ has characteristic zero, then  $C = Z(N) \oplus S$ where $S$ is the maximal semisimple ideal of $L$. 
\end{itemize}
\end{coro}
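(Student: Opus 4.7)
The plan rests on Theorem \ref{t:rad}, which under the regularity hypothesis gives $R(C)=Z(N)$ and so makes $C/Z(N)$ semisimple. Each of the three parts then amounts to producing a complement to $Z(N)$ inside $C$; the stronger the Frattini hypothesis, the more rigid the complement one can find.

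For (i) I would observe that $\phi(C)\cap R(C)=\phi(C)\cap Z(N)=0$, and invoke a Levi-type splitting available over an arbitrary field when the Frattini ideal misses the radical, obtaining $C=Z(N)\dot{+}B$ with $B$ a semisimple subalgebra. A short bracket computation, using that $Z(N)$ is abelian and that $B\subseteq C_L(N)$ annihilates $Z(N)$, then yields $[C,C]=[B,B]=B^2$. Since $[C,C]$ is characteristic in the ideal $C$ of $L$, it is itself an ideal of $L$, and so is $B^2$.

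For (ii) the stronger hypothesis $\phi(L)\cap Z(N)=0$ should first be used to deduce $\phi(C)\cap Z(N)=0$, so that (i) provides a splitting $C=Z(N)\dot{+}B$. The remaining task is to upgrade $B$ to an $L$-invariant complement: $\phi(L)\cap Z(N)=0$ forces $Z(N)$ to be a completely reducible $L$-module, and splitting the $L$-module extension $0\to Z(N)\to C\to C/Z(N)\to 0$ yields such a complement, which is automatically a subalgebra since $Z(N)$ is central in $C$. Maximality of $B$ among semisimple ideals of $L$ then follows because any ideal of $L$ strictly containing $B$ inside $C$ must meet $Z(N)$ non-trivially, and therefore carries a non-zero abelian ideal.

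Part (iii) is the easiest. Over a field of characteristic zero, Levi's theorem applied to $C$ gives $C=R(C)\oplus S_c$ with $S_c$ semisimple; because $R(C)=Z(N)$ is central in $C$ (as $C$ centralises $N$), this is automatically a direct sum of ideals, so $S_c=[C,C]$ is characteristic in $C$ and hence an ideal of $L$. Any semisimple ideal $J$ of $L$ satisfies $J\cap N=0$ (the intersection being a solvable ideal of $J$), hence $J\subseteq C$; and since $J=[J,J]$ in characteristic zero, $J\subseteq [C,C]=S_c$, identifying $S_c$ with the unique maximal semisimple ideal. The main obstacle throughout is the ideal-upgrade in (ii): a semisimple Lie algebra in positive characteristic need not be perfect, so the subalgebra $B$ from (i) is not automatically $L$-invariant, and the full strength of $\phi(L)\cap Z(N)=0$ (rather than the weaker $\phi(C)\cap Z(N)=0$) has to be leveraged via the module-theoretic splitting described above.
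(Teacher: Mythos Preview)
Your treatment of parts (i) and (iii) is sound and essentially coincides with the paper's: both use that $R(C)=Z(N)$ (from Theorem~\ref{t:rad}) to see $C/Z(N)$ is semisimple, split off $Z(N)$ (via the Frattini lemma in (i), via Levi in (iii)), and identify the complement's derived algebra $B^2=C^2$ as an ideal of $L$.

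Part (ii), however, has a real gap. Your plan has two stages: first deduce $\phi(C)\cap Z(N)=0$ from $\phi(L)\cap Z(N)=0$ so as to invoke (i); second, upgrade the resulting subalgebra complement to an $L$-ideal by splitting the $L$-module extension $0\to Z(N)\to C\to C/Z(N)\to 0$, appealing to complete reducibility of $Z(N)$. The first stage is not justified over an arbitrary field (one does not in general have $\phi(C)\subseteq\phi(L)$ for an ideal $C$ in positive characteristic). More seriously, the second stage is simply invalid as stated: a completely reducible submodule need not be a direct summand. For instance, the one-dimensional submodule of the two-dimensional indecomposable module for a one-dimensional Lie algebra is simple but not complemented. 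So even granting that $\phi(L)\cap Z(N)=0$ makes $Z(N)$ a sum of minimal $L$-ideals, this alone does not force the extension to split.

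The paper sidesteps both issues by applying the Frattini complement lemma (\cite[Lemma~7.2]{frat}) to $L$ rather than to $C$: from $\phi(L)\cap Z(N)=0$ one gets $L=Z(N)\dot{+}U$ for a subalgebra $U$, and then sets $B=C\cap U$. Because $Z(N)\subseteq C$, this $B$ is a vector-space complement to $Z(N)$ in $C$; and because $[Z(N),C]=0$, one checks directly that $[L,C\cap U]=[U,C\cap U]\subseteq C\cap U$, so $B$ is already an ideal of $L$. No module-theoretic splitting or reduction to (i) is needed. Your maximality argument also needs the observation (which the paper makes explicit) that any semisimple ideal $S$ of $L$ satisfies $[S,N]\subseteq S\cap N=0$ and hence lies in $C$; only then does the comparison with $Z(N)\oplus B$ show $S\subseteq B$.
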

\begin{proof} \begin{itemize}
\item[(i)] Suppose that $\phi(C)\cap Z(N)=0$. Then $C=Z(N)\dot{+} B$ for some subalgebra $B$ of $C$, by \cite[Lemma 7.2]{frat}. Moreover, $B\cong C/Z(N)$ is semisimple, by Theorem \ref{t:rad}, and $B^2=C^2$ is an ideal of $L$.
\item[(ii)] Suppose that $\phi(L)\cap Z(N)=0$. The $L=Z(N)\dot{+} U$ for some subalgebra $U$ of $L$, by \cite[Lemma 7.2]{frat} again. It follows that $C=Z(N)\oplus  B$ where $B=C\cap U$, which is an ideal of $L$, and $B$ is semisimple. Moreover, if $S$ is a semisimple ideal of $L$ with $B\subseteq S$, then $[S,N]\subseteq S\cap N=0$, so $S\subseteq C$. Hence $S=B$.
\item[(iii)] So suppose now that $F$ has characteristic zero. Then $C=Z(N)\dot{+} B$ where $B$ is a Levi factor of $C$. Also, $B=B^2=C^2$ is an ideal of $L$, so $C=Z(N)\oplus B$. Moreover, if $S$ is the maximal semisimple ideal of $L$, then $B \subseteq S$ and $[S,N]\subseteq S\cap N=0$, so $S \subseteq C$. It follows that $S=B$. 
\end{itemize}
\end{proof}
\bigskip

Finally, the following straightforward results will prove useful.

\begin{lemma}\label{l:centre} Let $K$ be an ideal of $L$ with $K \subseteq C_L(N)$. Then $Z(K)=Z(N) \cap K$.
\end{lemma}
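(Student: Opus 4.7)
The plan is to prove the two inclusions $Z(K)\subseteq Z(N)\cap K$ and $Z(N)\cap K \subseteq Z(K)$ separately.

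The reverse inclusion is the easier of the two: take $x\in Z(N)\cap K$. Then $x\in Z(N)\subseteq N$, and since $K\subseteq C_L(N)$ we have $[x,K]\subseteq [N,K]=0$. Combined with $x\in K$, this says precisely $x\in Z(K)$.

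For the forward inclusion I would argue in two steps. First, $Z(K)\subseteq K$ is immediate, so what needs work is $Z(K)\subseteq Z(N)$. I would show that $Z(K)$ is in fact an ideal of $L$: since $K$ is an ideal of $L$, every $\operatorname{ad} l$ with $l\in L$ restricts to a derivation of $K$, and a standard one-line calculation ($[D(z),k]=D([z,k])-[z,D(k)]=0$ for $z\in Z(K)$, $k\in K$) shows that the centre of any algebra is preserved by all of its derivations. Being abelian, $Z(K)$ is then a nilpotent ideal of $L$, hence contained in $N$. Combining this with $[Z(K),N]\subseteq [K,N]=0$ gives $Z(K)\subseteq Z(N)$, as required.

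There is no real obstacle; the only point worth flagging is the derivation-invariance argument that upgrades $Z(K)$ from a characteristic ideal of $K$ to a genuine ideal of $L$, which is what allows the appeal to the definition of the nilradical $N$.
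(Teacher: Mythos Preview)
Your proof is correct and follows essentially the same line as the paper's: both show $Z(K)$ is an abelian ideal of $L$ (hence in $N$), then use $[Z(K),N]\subseteq[K,N]=0$ for the forward inclusion, and $[Z(N)\cap K,K]\subseteq[N,K]=0$ for the reverse. The only difference is that you spell out the derivation-invariance reason why $Z(K)$ is an ideal of $L$, which the paper simply asserts.
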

\begin{proof} Clearly $Z(K)$ is an abelian ideal of $L$, so $Z(K) \subseteq N$. Moreover, $[Z(K),N] \subseteq [K,N] =0$, so $Z(K) \subseteq Z(N) \cap K$. Also $[Z(N) \cap K,K] \subseteq [N,K]=0$, so $Z(N) \cap K \subseteq Z(K)$.
\end{proof}

\begin{lemma}\label{l:phi} Let $L$ be any Lie algebra and suppose that $A$ is an ideal of $L$ with $A^2=A$. Then $Z(A) \subseteq \phi(L)$. If $A$ is a quasi-minimal ideal of $L$, then $Z(A)=A\cap \phi(L)$.
\end{lemma}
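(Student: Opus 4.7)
The plan is to handle the two assertions separately. For the first, I would assume for contradiction that $Z(A)\not\subseteq \phi(L)$, so some maximal subalgebra $M$ of $L$ fails to contain $Z(A)$. Since $A^2=A$, Lemma \ref{l:char} makes $A$ characteristic in $L$, and because the centre of any algebra is stable under its derivations (from $D[x,y]=[Dx,y]+[x,Dy]$, if $x\in Z(A)$ and $y\in A$ then $[Dx,y]=0$), $Z(A)$ is characteristic in $L$ and in particular an ideal. Maximality of $M$ then gives $M+Z(A)=L$, so by the modular law $A=(A\cap M)+Z(A)$. Expanding
$$A=[A,A]=[(A\cap M)+Z(A),(A\cap M)+Z(A)]$$
and using $[A,Z(A)]=0$ collapses every cross term, leaving $A=[A\cap M,A\cap M]\subseteq M$. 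Then $Z(A)\subseteq A\subseteq M$, contradicting the choice of $M$.

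For the second part, $Z(A)\subseteq A\cap \phi(L)$ follows immediately from the first. Since $A\cap \phi(L)$ is an ideal of $L$ containing $Z(A)$, the quotient $(A\cap \phi(L))/Z(A)$ is an ideal of $L/Z(A)$ sitting inside the minimal ideal $A/Z(A)$. Minimality leaves two possibilities: either $(A\cap \phi(L))/Z(A)=0$, which gives $A\cap \phi(L)=Z(A)$ as required, or $(A\cap \phi(L))/Z(A)=A/Z(A)$, whence $A\subseteq \phi(L)$. The second alternative I would rule out by invoking the fact that $\phi(L)$ is nilpotent for finite-dimensional Lie algebras; combined with $A^2=A$ this forces $A=A^n\subseteq \phi(L)^n=0$ for large $n$, so $A=0$ and the desired equality still holds trivially.

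The main obstacle is the exclusion of $A\subseteq \phi(L)$ when $A$ is a nonzero perfect ideal. I expect the author either to cite a result from \cite{frat} guaranteeing that $\phi(L)$ contains no such ideal, or to give a direct argument using the non-generator characterisation of $\phi(L)$ (if $A\subseteq \phi(L)$, any subalgebra $H$ with $H+A=L$ already equals $L$) together with a structural argument ruling out the absorption of a perfect ideal. Everything else in the proof is a brief application of Lemma \ref{l:char} and the modular law.
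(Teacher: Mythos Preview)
Your proof is correct and follows essentially the same route as the paper: assume $Z(A)\not\subseteq\phi(L)$, pick a maximal subalgebra $M$ with $L=Z(A)+M$, deduce $A=(A\cap M)+Z(A)$ and hence $A=A^2\subseteq M$, a contradiction; then for the second part use minimality of $A/Z(A)$ together with the nilpotency of $\phi(L)$ to exclude $A\subseteq\phi(L)$. Your closing worry about the ``main obstacle'' is unnecessary---the nilpotency argument you already wrote (that $A=A^n\subseteq\phi(L)^n=0$ is impossible for nonzero $A$) is exactly what the paper uses, so no further citation or structural argument is needed.
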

\begin{proof} Suppose that $Z(A) \not \subseteq \phi(L)$. Then there is a maximal subalgebra $U$ of $L$ such that $L=Z(A)+ U$. Thus $A=Z(A)+ U\cap A$ and $U\cap A$ is an ideal of $L$. It follows that $A=A^2=(U\cap A)^2\subseteq U\cap A \subseteq A$, whence $Z(A)\subseteq U$, a contradiction. Hence $Z(A)\subseteq \phi(L)$.
\par

Suppose now that $A$ is a quasi-minimal ideal of $L$. Then $Z(A)\subseteq A\cap \phi(L) \subseteq A$, so $A\cap \phi(L)=A$ or $Z(A)$. The former implies that $A \subseteq \phi(L)$, which is impossible since $\phi(L)$ is nilpotent. Hence $A\cap \phi(L)=Z(A)$.
\end{proof}

\section{The quasi-minimal radical}
Here we construct a radical by adjoining the quasi-minimal ideals of $L$ to its nilradical $N$.

\begin{lemma}\label{l:mchar} Quasi-minimal ideals of $L$ are characteristic in $L$.
\end{lemma}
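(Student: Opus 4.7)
The plan is essentially to observe that this is an immediate consequence of Lemma \ref{l:char}. By the definition of quasi-minimal, any quasi-minimal ideal $A$ of $L$ satisfies $A^2 = A$, i.e.\ $A$ is perfect. Since $A$ is in particular a subideal of $L$, Lemma \ref{l:char} applies verbatim and yields that $A$ is invariant under every derivation of $L$, which is precisely the statement that $A$ is characteristic.

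So the only content of the proof is to note the hypotheses of Lemma \ref{l:char} are met. I would not expect any obstacle: the condition $A^2 = A$ is built into the definition of quasi-minimality (its role there is precisely to rule out abelian pathologies and to make the perfect-subideal machinery available), and no use needs to be made of the minimality of $A/Z(A)$ in $L/Z(A)$. The lemma is best viewed as a convenient restatement of Lemma \ref{l:char} in the terminology just introduced, so that later arguments about the sum of quasi-minimal ideals can invoke derivation-invariance without repeatedly citing perfectness.
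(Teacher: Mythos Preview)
Your proposal is correct and matches the paper's proof exactly: the paper's entire argument is the single sentence ``This follows from Lemma \ref{l:char},'' relying on $A^2=A$ from the definition of quasi-minimal. Your additional commentary about why minimality of $A/Z(A)$ is not needed is accurate but not required.
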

\begin{proof} This follows from Lemma \ref{l:char}.
\end{proof} 

\begin{lemma}\label{l:irred} Let $A/Z(A)$ be a minimal ideal of $L/Z(A)$. Then $A = A^2 + Z(A)$ and $A^2$ is quasi-minimal in $L$. 
\end{lemma}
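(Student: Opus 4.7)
The plan is to exploit the minimality of $A/Z(A)$ in $L/Z(A)$ to pin down the structure of $A^2$, and then to check directly that $A^2$ meets both parts of the definition of a quasi-minimal ideal.

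I would first observe that $A^2$ is an ideal of $L$ (because $A$ is), so $(A^2 + Z(A))/Z(A)$ is an ideal of $L/Z(A)$ contained in $A/Z(A)$. By minimality it is either $0$ or all of $A/Z(A)$. The first alternative would force $A^2 \subseteq Z(A)$, making $A/Z(A)$ abelian; ruling out this degenerate case (which is precisely where the identity $A = A^2 + Z(A)$ would collapse to $Z(A) = A$, contradicting that $A/Z(A)$ is a non-zero minimal ideal here) lands us in the second alternative, giving the first conclusion $A = A^2 + Z(A)$.

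With that identity in hand, perfectness of $A^2$ is a one-line computation: since $Z(A)$ commutes with $A$, we have
\[ A^2 = [A,A] = [A^2 + Z(A), A^2 + Z(A)] = [A^2, A^2] = (A^2)^2. \]
To identify $Z(A^2)$, note that for $x \in Z(A^2) \subseteq A^2 \subseteq A$ one has $[x,A] = [x, A^2 + Z(A)] = 0$, so $x \in A^2 \cap Z(A)$; the reverse inclusion is immediate, and hence $Z(A^2) = A^2 \cap Z(A)$.

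It remains to show that $A^2/Z(A^2)$ is a minimal ideal of $L/Z(A^2)$. Suppose $B$ is an ideal of $L$ with $Z(A^2) \subsetneq B \subseteq A^2$. Then $(B + Z(A))/Z(A)$ is an ideal of $L/Z(A)$ contained in $A/Z(A)$, and it is non-zero, for $B \subseteq Z(A)$ would force $B \subseteq A^2 \cap Z(A) = Z(A^2)$, contrary to the choice of $B$. Minimality of $A/Z(A)$ then gives $B + Z(A) = A$, so
\[ A^2 = [A,A] = [B + Z(A), B + Z(A)] = [B,B] \subseteq B, \]
and hence $B = A^2$. The only delicate point in the whole argument is the first-step dichotomy, where one must rule out $A^2 \subseteq Z(A)$; once the identity $A = A^2 + Z(A)$ is secured, everything that follows is a straight transfer between $L/Z(A)$ and $L/Z(A^2)$.
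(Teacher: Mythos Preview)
Your proof is correct and follows essentially the same route as the paper's: both use minimality of $A/Z(A)$ to obtain $A = A^2 + Z(A)$ (dismissing the abelian alternative in the same way), then identify $Z(A^2) = A^2 \cap Z(A)$ and conclude that $A^2/Z(A^2)$ is a minimal ideal of $L/Z(A^2)$. The only cosmetic difference is that the paper reaches the last step in one line via the second isomorphism theorem, observing $A^2/Z(A^2) = A^2/(A^2\cap Z(A)) \cong (A^2+Z(A))/Z(A) = A/Z(A)$, whereas you verify minimality directly by taking an intermediate ideal $B$ and squeezing $A^2 = [B+Z(A),B+Z(A)] \subseteq B$.
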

\begin{proof} Let $P = A^2$ and $\overline{L} = L/Z(A)$. Then $\overline{P}$ is an ideal of $\overline{L}$ and $\overline{A}$ is minimal, so $\overline{P} = \overline{0}$ or $\overline{A}$. The former implies that $\overline{A}$ is abelian, a contradiction. Hence $\overline{P} = \overline{A}$, so $A = P+Z(A)=A^2+Z(A)$. Also, $P=A^2=P^2$ and $[Z(P),A]=[Z(P),P]+[Z(P),Z(A)]=0$, so $P\cap Z(A)=Z(P)$. Thus $P/Z(P)=P/P \cap Z(A) \cong P+Z(A)/Z(A)=A/Z(A)$ is a minimal ideal of $L/Z(P)$. 
\end{proof}

\begin{propo}\label{p:sub} Let $A$ be quasi-minimal in $L$ and $B$ be an ideal of $L$. Then either $A \subseteq B$ or $A \subseteq C_L(B)$.
\end{propo}
\begin{proof} Clearly $A \cap B +Z(A)/Z(A)$ is an ideal of $L/Z(A)$ contained in $A/Z(A)$, so $A \cap B +Z(A)=A$ or $A \cap B+Z(A)=Z(A)$. The former implies that $A=A^2 \subseteq A \cap B \subseteq A$, whence $A=A \cap B$ and $A \subseteq B$. The latter yields that $A \cap B \subseteq Z(A)$, giving $[A,B]=[A^2,B] \subseteq [A,[A,B]] \subseteq [A,A \cap B] \subseteq [A,Z(A)]=0$ and so $A \subseteq C_L(B)$.
\end{proof}
\bigskip

The {\em quasi-minimal components} of $L$ are its quasi-minimal ideals. Write MComp($L$) for the set of quasi-minimal components of $L$, and let $E^{\dagger}(L)$ be the subalgebra generated by them. Then $E^{\dagger}(L)$ is a characteristic ideal of $L$, by Lemma \ref{l:char}.

\begin{coro}\label{c:cent} $E^{\dagger}(L) \subseteq C_L(R)$.
\end{coro}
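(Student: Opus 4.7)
The plan is to reduce the statement to showing that each individual quasi-minimal component $A$ of $L$ centralises the radical $R$, since $C_L(R)$ is a subalgebra and $E^{\dagger}(L)$ is generated by such $A$.

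First I would apply Proposition \ref{p:sub} with $B = R$: for any quasi-minimal ideal $A$ of $L$, either $A \subseteq R$ or $A \subseteq C_L(R)$. The task is then to rule out the first alternative.

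Suppose $A \subseteq R$. By definition of quasi-minimal, $A^2 = A$, so $A$ is perfect and hence $A^{(n)} = A$ for every $n \in \N$. On the other hand, $A$ is a subalgebra of the solvable ideal $R$, so $A^{(n)} = 0$ for some $n$. Combining, $A = 0$, which contradicts the fact that $A/Z(A)$ is a (nonzero) minimal ideal of $L/Z(A)$. Therefore $A \subseteq C_L(R)$.

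Since every quasi-minimal component lies in the subalgebra $C_L(R)$, the subalgebra $E^{\dagger}(L)$ they generate does too. The main (and only) step requiring thought is the dichotomy from Proposition \ref{p:sub}; once one has it, the perfect-versus-solvable clash disposes of the case $A \subseteq R$ immediately, so no genuine obstacle is anticipated.
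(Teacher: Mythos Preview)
Your proposal is correct and follows exactly the paper's own argument: apply Proposition~\ref{p:sub} with $B=R$, then rule out $A\subseteq R$ using $A^2=A$ versus the solvability of $R$. The paper's proof is simply a terser version of what you wrote.
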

\begin{proof} Let $A \in$ MComp$(L)$ and put $B=R$ in Proposition \ref{p:sub}. Then either $A \subseteq R$ or $A \subseteq C_L(R)$. But the former is impossible, since $A^2=A$, whence $A \subseteq C_L(R)$.
\end{proof}

\begin{coro}\label{c:comp} Distinct quasi-minimal components of $L$ commute, so \[ E^{\dagger}(L) = \sum_{P\in MComp(L)}P,\]
where $[P,Q]=0$ and $P\cap Q \subseteq Z(R)$ for all $P,Q \in$  MComp$(L)$.
\end{coro}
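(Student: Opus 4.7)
The plan is to use Proposition~\ref{p:sub} applied to two distinct quasi-minimal components $P$ and $Q$, viewed alternately as the quasi-minimal ideal $A$ and the ideal $B$. This gives the four cases $P\subseteq Q$, $P\subseteq C_L(Q)$, $Q\subseteq P$, $Q\subseteq C_L(P)$, and I would argue that the containment cases force $P=Q$, so the only surviving possibility is $[P,Q]=0$.

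To rule out $P\subseteq Q$ when $P\neq Q$, I would use the fact that $P$ is an ideal of $L$, so $(P+Z(Q))/Z(Q)$ is an ideal of $L/Z(Q)$ contained in the minimal ideal $Q/Z(Q)$. Either $P\subseteq Z(Q)$, in which case $P^2=0$ contradicts $P^2=P$, or $P+Z(Q)=Q$, and then $Q=Q^2=(P+Z(Q))^2=P^2=P$, again a contradiction with $P\neq Q$. The case $Q\subseteq P$ is symmetric. So Proposition~\ref{p:sub} yields $P\subseteq C_L(Q)$ and $Q\subseteq C_L(P)$, i.e.\ $[P,Q]=0$.

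For the intersection claim, since $[P,Q]=0$ we get $[P\cap Q,P]\subseteq [Q,P]=0$ and $[P\cap Q,Q]\subseteq [P,Q]=0$, so $P\cap Q\subseteq Z(P)\cap Z(Q)$. In particular $P\cap Q$ is an abelian ideal of $L$, hence $P\cap Q\subseteq N\subseteq R$. By Corollary~\ref{c:cent}, $P\subseteq C_L(R)$, so $[P\cap Q,R]\subseteq [P,R]=0$, giving $P\cap Q\subseteq Z(R)$ as required.

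Finally, for the sum decomposition, note that $\sum_{P\in\mathrm{MComp}(L)}P$ contains every quasi-minimal component, and since distinct components commute while $[P,P]=P^2=P\subseteq P$, this sum is already a subalgebra (in fact an ideal) of $L$. It therefore coincides with the subalgebra $E^{\dagger}(L)$ generated by $\mathrm{MComp}(L)$. The main (very mild) obstacle is just the bookkeeping in the first paragraph to exclude the containment cases; everything else is a direct application of Proposition~\ref{p:sub}, Corollary~\ref{c:cent} and the definition of quasi-minimality.
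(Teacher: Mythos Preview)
Your proof is correct and follows essentially the same route as the paper's: apply Proposition~\ref{p:sub} to obtain commutation, then use $P\cap Q\subseteq Z(P)\cap Z(Q)\subseteq N$ together with Corollary~\ref{c:cent} to conclude $P\cap Q\subseteq Z(R)$. The paper simply asserts that the commutation of distinct components ``follows directly from Proposition~\ref{p:sub}'' without spelling out why $P\subseteq Q$ is impossible for $P\neq Q$; your explicit argument via the minimality of $Q/Z(Q)$ and $Q^2=(P+Z(Q))^2=P$ is exactly the missing detail and is handled correctly.
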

\begin{proof} This first assertion follows directly from Proposition \ref{p:sub}. But then $P \cap Q \subseteq Z(P) \cap Z(Q) \subseteq N$ and $[P,R]=[Q,R]=0$, using Corollary \ref{c:cent}. Hence $P \cap Q \subseteq Z(R)$.
\end{proof}

\begin{lemma}\label{l:sub} If $B$ is an ideal of $L$, then MComp($B) \subseteq $ MComp$(L) \cap B$. Moreover, if $B$ is regular, then this is an equality.
\end{lemma}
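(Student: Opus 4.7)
The plan is to handle the two inclusions separately. For MComp$(B) \subseteq$ MComp$(L) \cap B$, I would start with $A \in$ MComp$(B)$ and note that $A$ is a perfect subideal of $L$ via the chain of ideals $A \subseteq B \subseteq L$, so Lemma \ref{l:char} realizes $A$ as a characteristic ideal of $L$. Any ideal $K$ of $L$ with $Z(A) \subseteq K \subseteq A$ is automatically an ideal of $B$, so the minimality of $A/Z(A)$ in $B/Z(A)$ forces $K = Z(A)$ or $K = A$, yielding minimality of $A/Z(A)$ in $L/Z(A)$; combined with $A \subseteq B$ this places $A$ in MComp$(L) \cap B$.

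For the reverse inclusion under the assumption that $B$ is regular, I would take $A \in$ MComp$(L) \cap B$ and suppose for contradiction that there exists an ideal $J$ of $B$ with $Z(A) \subsetneq J \subsetneq A$. The first move is to show $J$ must be solvable: the iterated derived subalgebras $J^{(k)}$ are all ideals of $B$ contained in $A$, and if the series terminates at a nonzero perfect $J^{(k)}$, Lemma \ref{l:char} (applied via $J^{(k)} \subseteq B \subseteq L$) makes $J^{(k)}$ an ideal of $L$. Either $J^{(k)} \subseteq Z(A)$, whereupon abelianness plus perfectness forces $J^{(k)} = 0$, or by minimality of $A/Z(A)$ in $L/Z(A)$ one has $A = J^{(k)} + Z(A)$; expanding $A = A^2$ and using $[A, Z(A)] = 0$ together with the perfection of $J^{(k)}$ collapses this to $A = J^{(k)} \subseteq J$, contradicting $J \subsetneq A$.

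Next I would exploit regularity. If $B$ is nilregular, Makarfi's theorem (appearing in the proof of Theorem \ref{t:nil}) renders $N(B)$ characteristic in $B$, hence an ideal of $L$, so $N(B) \cap A$ is an ideal of $L$ sitting inside $A$ and containing $Z(A)$. If the containment were strict, minimality of $A/Z(A)$ in $L/Z(A)$ would force $A \subseteq N(B)$, impossible since $A$ is perfect and nonzero while $N(B)$ is nilpotent; thus $N(B) \cap A = Z(A)$. The parallel argument via Petravchuk's theorem (behind Theorem \ref{t:solv}) delivers $R(B) \cap A = Z(A)$ in the solregular case.

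To conclude: in the solregular case, $J$ is a solvable ideal of $B$, so $J \subseteq R(B) \cap A = Z(A)$, contradicting $J \supsetneq Z(A)$. In the nilregular case, let $J^{(d)}$ be the last nonzero term of the derived series of the now solvable $J$; it is abelian, an ideal of $B$, hence contained in $N(B) \cap A = Z(A)$. Then $[J^{(d-1)}, J^{(d-1)}] = J^{(d)} \subseteq Z(A)$ together with $[J^{(d-1)}, Z(A)] \subseteq [A, Z(A)] = 0$ render $J^{(d-1)}$ nilpotent of class at most two; as an ideal of $B$ it too lies in $N(B) \cap A = Z(A)$, forcing $J^{(d)} = [J^{(d-1)}, J^{(d-1)}] = 0$, the final contradiction. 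The chief obstacle is precisely this last step: ruling out every solvable ideal of $B$ strictly between $Z(A)$ and $A$, where the characteristic-ideal property of $N(B)$ (or $R(B)$) in $B$ under regularity provides the essential leverage, turning $N(B) \cap A$ into an ideal of $L$ on which $L$-minimality of $A/Z(A)$ can bite.
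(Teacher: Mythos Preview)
Your proof is correct, but your route for the reverse inclusion differs from the paper's in an interesting way. The paper takes a minimal ideal $C/Z(A)$ of $B/Z(A)$ inside $A/Z(A)$ and splits into two cases: either $C^2\subseteq Z(A)$, so $C^3=0$ and $C$ is a nilpotent ideal of $B$; or $C=C^2+Z(A)$, whence a direct bracket computation $[L,C]\subseteq [[L,C],C]+Z(A)\subseteq C$ shows $C$ is an $L$-ideal and minimality of $A/Z(A)$ in $L/Z(A)$ finishes. In the nilpotent case the paper invokes Corollary~\ref{c:cent} to place $A\subseteq C_L(N)$ (resp.\ $C_L(R)$), and then Theorem~\ref{t:nil} (resp.\ \ref{t:solv}) pushes $C$ into $N(L)$ (resp.\ $R(L)$), so $[C,A]=0$ and $C\subseteq Z(A)$.

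You avoid Corollary~\ref{c:cent} entirely: instead of sending $C$ into the radicals of $L$, you use that $N(B)$ (resp.\ $R(B)$) is characteristic in $B$ under regularity, so $N(B)\cap A$ is an $L$-ideal, and then $L$-minimality of $A/Z(A)$ forces $N(B)\cap A=Z(A)$. Your derived-series reduction to a solvable $J$, followed by the two-step descent $J^{(d)}\subseteq Z(A)\Rightarrow J^{(d-1)}$ nilpotent $\Rightarrow J^{(d-1)}\subseteq Z(A)\Rightarrow J^{(d)}=0$, replaces the paper's cleaner dichotomy on a minimal $C$. The trade-off: the paper's argument is shorter and reuses the established containment $E^{\dagger}(L)\subseteq C_L(R)$, while yours is more self-contained, not relying on how quasi-minimal components sit relative to the nilradical of $L$. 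Your treatment of the first inclusion is also a touch more explicit than the paper's about why $B$-minimality of $A/Z(A)$ passes to $L$-minimality.
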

\begin{proof} Let $A$ be a quasi-minimal ideal of $B$. Then $A$ is a quasi-minimal ideal of $L$, by Lemma \ref{l:mchar}. Thus  MComp($B) \subseteq$ MComp$(L) \cap B$.
\par

Now suppose that $B$ is regular, and let $A \in $ MComp$(L) \cap B$, so $A$ is a quasi-minimal ideal of $L$ and $A \subseteq B \cap C_L(N)$, by Corollary \ref{c:cent}. Let $C/Z(A)$ be a minimal ideal of $B/Z(A)$ with $C \subseteq A$. Then $C^2 \subseteq Z(A)$ or   $C^2+Z(A)=C$. The former implies that $C^3=0$, and hence that $C$ is a nilpotent ideal of $B$.  If $B$ is nilregular, it follows from Theorem \ref{t:nil} that $C \subseteq N$, whence $[C,A]=0$ and $C \subseteq Z(A)$, a contradiction. Similarly, if $B$ is solregular, then $C\subseteq R(B) \subseteq R(L)$, by Theorem \ref{t:solv}. But then $[C,A]=0$, by Corollary \ref{c:cent}, since $A \in E^{\dagger}(L)$, leading to the same contradiction. Hence $C^2+Z(A)=C$. But now
\[ [L,C]=[L,C^2+Z(A)] \subseteq [[L,C],C]+Z(A) \subseteq [B,C]+Z(A) \subseteq C, 
\]
so $C$ is an ideal of $L$. But $A/Z(A)$ is a minimal ideal of $L/Z(A)$, so $C=Z(A)$ or $C=A$. It follows that $A/Z(A)$ is a minimal ideal of $B/Z(A)$ and $A^2=A$. Thus $A \in$ MComp$(B)$.
\end{proof}

\begin{ex}\label{e:mcomp} Note that if $B$ is not regular then the inclusion in Lemma \ref{l:sub} can be strict. For, let $L$ be as in Example \ref{e:pasha}. Then ${\mathcal O}_m$ has a unique maximal ideal ${\mathcal O}_m^+$ and  $A^+=sl(2)\otimes {\mathcal O}_m^+$ is the unique maximal ideal of $A$ (and is nilpotent). Hence MComp$(A) \subseteq A^+ \neq A$, whereas MComp$(L)=A$.
\end{ex}

\begin{propo}\label{p:soc} Let $L$ be a Lie algebra in which $C_L(N)$ is regular. Put $Z=Z(N)$, $\overline{L}=L/Z$, $\overline{S}=Soc(\overline{C_L(N)})$. Then $E^{\dagger}(L)=S^2$ and $S=E^{\dagger}(L)+Z$.
\end{propo}
\begin{proof} Let $H=C_L(N)$. Then $R(\overline{H})=0$, by Theorem \ref{t:rad}. Hence each minimal ideal of $\overline{H}$ is quasi-minimal in $\overline{H}$, and so is a quasi-minimal component of $\overline{H}$. Thus $\overline{S}\subseteq E^{\dagger}(\overline{H})$. Let $\overline{K} \in$ MComp$(\overline{H}) \subseteq$ MComp$(\overline{L})$, by Lemma \ref{l:sub}. Hence $K/Z(K)$ is a quasi-minimal ideal of $L/Z(K)$, by Lemma \ref{l:centre}. Then $K=K^2+Z$ with $K^2$ quasi-minimal in $L$, since $Z(K)=Z$ by Lemma \ref{l:irred}. Hence $K^2\in$ MComp$(L)$, so $S\subseteq E^{\dagger}(L)+Z$. 
\par

Let $P\in$ MComp$(L)$. Then $P \subseteq H$ since $E^{\dagger}(L) \subseteq H$, by Corollary \ref{c:cent}. Hence $P \in$ MComp$(L) \cap H=$ MComp$(H)$, by Lemma \ref{l:sub}. Hence $\overline{P}$ is a minimal ideal of $\overline{H}$, so $P\subseteq S$. Thus $S=E^{\dagger}(L)+Z$ and $E^{\dagger}(L)=S^2$.
\end{proof}
\bigskip

We define the {\em quasi-minimal radical} of $L$ to be $N^{\dagger}(L) = N+E^{\dagger}(L)$. From now on we will denote $N^{\dagger}(L)$ simply by $N^{\dagger}$. Then this has the property we are seeking.

\begin{theor}\label{t:gennil} If $L$ is a Lie algebra, over any field $F$, with nilradical $N$, then $C_L(N^{\dagger})=Z(N)$. In particular, $C_L(N^{\dagger}) \subseteq N^{\dagger}$.
\end{theor}
\begin{proof} Let $C = C_L(N^{\dagger})$. Then $Z(N) \subseteq C$, by Corollary \ref{c:cent}. Suppose that $Z(N) \neq C$ and let $A/Z(N)$ be a minimal ideal of $L/Z(N)$ with $A \subseteq C$. Then $[A,Z(N)]\subseteq [C,N^{\dagger}]=0$, so $Z(N)\subseteq Z(A)$. Thus $A=Z(A)$ or $Z(A)=Z(N)$. The former implies that $A\subseteq N$. But $[A,N]\subseteq [C,N^{\dagger}]=0$, so $A\subseteq Z(N)$, a contradiction. The latter implies that $A^2\subseteq E^{\dagger}\subseteq N^{\dagger}$, by Lemma \ref{l:irred}. Hence $A^3\subseteq [C,N^{\dagger}]=0$, so $A\subseteq N$, which leads to the same contradiction as before. The result follows. 
\end{proof}

\begin{propo}\label{p:double} Let $L$ be a Lie algebra in which $N^{\dagger}$ is regular. Then $N^{\dagger}(N^{\dagger})=N^{\dagger}$.
\end{propo}
\begin{proof} Clearly $N^{\dagger}(N^{\dagger}) \subseteq N^{\dagger}$. But $ E^{\dagger}(L)\subseteq E^{\dagger}(N^{\dagger})$, by putting $B=N^{\dagger}$ in Lemma \ref{l:sub}, and, clearly, $N \subseteq N(N^{\dagger})$, giving the reverse inclusion.
\end{proof}

\begin{ex}\label{e:regular} Again, Proposition \ref{p:double} does not hold if $N^{\dagger}$ is not regular. For, let $L$ be as in Example \ref{e:pasha}. Then $N^{\dagger}=A$, but $N^{\dagger}(N^{\dagger})=A^+$.
\end{ex}

Next we investigate the behaviour of $N^{\dagger}$ with respect to factor algebras, direct sums and ideals.

\begin{propo}\label{p:quotient} Let $L$ be a Lie algebra over any field, and let $I$ be an ideal of $L$. Then 
\[ \frac{N^{\dagger}(L)+I}{I}\subseteq N^{\dagger}\left(\frac{L}{I}\right).
\]
\end{propo}
\begin{proof} Clearly $N(L)+I/I\subseteq N(L/I)$. Let $A$ be a quasi-minimal ideal of $L$, so $A/Z(A)$ is a minimal ideal of $L/Z(A)$ and $A^2=A$.  Put $C=C_L(A+I/I)$. Then $Z(A)\subseteq C\cap A\subseteq A$, so $C\cap A=A$ or $C\cap A=Z(A)$. The former implies that $A=A^2\subseteq I$, whence $A+I/I \subseteq N(L/I)$. If the latter holds, then $C=C\cap (A+I)=C\cap A +I=Z(A)+I$ and $A\cap I\subseteq A\cap C=Z(A)$, whence
\[ \frac{A+I/I}{Z(A+I/I)} \cong \frac{A+I}{C}= \frac{A+I}{Z(A)+I}\cong \frac{A}{Z(A)+A\cap I}=\frac{A}{Z(A)}
\] and
\[  \left(\frac{A+I}{I}\right)^2=\frac{A+I}{I}.
\]
Thus $A+I/I$ is a quasi-minimal ideal of $L/I$ and 
\[ \frac{E^{\dagger}(L)+I}{I}\subseteq E^{\dagger}\left( \frac{L}{I}\right). 
\] The result follows.
\end{proof}
\bigskip

The above inclusion can be strict, as we shall see later.

\begin{propo}\label{p:sum}  Let $L$ be a Lie algebra over any field, and suppose that $L=I\oplus J$, where $I, J$ are ideals of $L$. Then $N^{\dagger}(L)=N^{\dagger}(I)\oplus N^{\dagger}(J)$.
\end{propo}
\begin{proof} It is easy to see that $N^{\dagger}(I)\oplus N^{\dagger}(J) \subseteq N^{\dagger}(L)$. Let $\pi_I$, $\pi_J$ be the projection maps onto $I, J$ respectively. Then $N(L)=\pi_I(N(L))\oplus \pi_J(N(L))$. Clearly $\pi_I(N(L))\subseteq N(I)$ and $\pi_J(N(L))\subseteq N(J)$, so $N(L)\subseteq N(I)\oplus N(J)$.
\par

Let $A$ be a quasi-minimal ideal of $L$, so $A/Z(A)$ is a minimal ideal of $L$ and $A^2=A$. Then
\[ A=A^2\subseteq [A,I\oplus J]=[A,I]\oplus [A,J]\subseteq A,
\] so $A=[A,I]\oplus [A,J]$. Since $A=A^2=[A,I]^2+[A,J]^2$, we also have that $[A,I]^2=[A,I]$ and $[A,J]^2=[A,J]$.
Now $[A,I]+Z(A)=Z(A)$ or $A$. The former implies that $[A,I] \subseteq Z(A)$, which gives that $[A,I]=[A,I]^2=0$. The latter yields that $A/Z(A)\cong [A,I]/Z(A)\cap [A,I]$. Now $Z(A)\cap [A,I]\subseteq Z([A,I])$, so $Z([A,I])=[A,I]$ or $Z(A)\cap [A,I]$. The former gives $[A,I]=[A,I]^2=0$ again, whereas the latter yields that $[A,I]/Z[A,I]$ is quasi-minimal and $[A,I]\in E^{\dagger}(I)$.
\par

Similarly $[A,J]=0$ or else $[A,J]\in E^{\dagger}(J)$. It follows that $E^{\dagger}(L)\subseteq E^{\dagger}(I)\oplus E^{\dagger}(J)$, whence the result.
\end{proof}

\begin{propo}\label{p:ideal2}  Let $L$ be a Lie algebra over any field, and let $I$ be a nilregular ideal of $L$. Then $N^{\dagger}(I)\subseteq N^{\dagger}(L)$.
\end{propo}
\begin{proof} Since $I$ is nilregular, we have that $N(I)\subseteq N(L)$, by Theorem \ref{t:nil} (i). Also, $E^{\dagger}(I)\subseteq E^{\dagger}(L)$, by Lemma \ref{l:sub}, whence the result.
\end{proof}
\bigskip

 The following result describes the ideals of $L$ contained in $E^{\dagger}$.

\begin{propo}\label{p:idealdagger} Let $A$ be an ideal of $L$ with $A \subseteq E^{\dagger}(L)$. Then $A=P_1+ \ldots + P_k+ Z(A)$, where $P_i$ is a quasi-minimal component of $L$ for $1 \leq i \leq k$.
\end{propo}
\begin{proof} Let $E^{\dagger}(L) = P_1 + \ldots + P_n$, where $P_i$ is a quasi-minimal component of $L$ for each $1 \leq i \leq n$. Then $P_i \subseteq A$ or $P_i \subseteq C_L(A)$ for each $i=1, \ldots, n$, by Proposition \ref{p:sub}. Let $P_i \subseteq A$ for $1 \leq i \leq k$ and $P_i \not \subseteq A$ for $k+1 \leq i \leq n$. Then $A \cap (P_{k+1} + \ldots + P_n) \subseteq Z(A)$, so $A= (P_1 + \ldots + P_k)+ Z(A)$.
\end{proof}
\bigskip

Finally we give two further characterisations of $N^{\dagger}$, valid over any field. Recall that $A/B$ is a chief factor of $L$ if $B$ is an ideal of $L$ and $A/B$ is a minimal ideal of $L/B$. 

\begin{theor}\label{t:cent} Let $L$ be a Lie algebra, over any field $F$, with radical $R$.. Then 
$$N^{\dagger}=\cap \{A+C_L(A/B) \mid A/B \text{ is a chief factor of } L\}.$$
\end{theor}
\begin{proof} Denote the given intersection by $I$, let $A/B$ be a chief factor of $L$ and let $P$ be a quasi-minimal component of $L$. Then $P \subseteq A$ or $P \subseteq C_L(A)$, by Proposition \ref{p:sub}. Hence $E^{\dagger} \subseteq I$. Moreover, $N \subseteq I$, by \cite[Lemma 4.3]{bg}, so $N^{\dagger} \subseteq I$.
\par

If $P$ is a quasi-minimal component of $L$ then $P/Z(P)$ is a chief factor of $L$. Also, if $C=C_L(P/Z(P))$ we have $[C,P]=[C,P^2]\subseteq [[C,P],P] \subseteq [Z(P),P]=0$, so $C=C_L(P)$ and $N \subseteq C$, by Corollary \ref{c:cent}. Hence $I \subseteq P+C_L(P/Z(P))=P+C_L(P)$. Now, if $P$, $Q$ are quasi-minimal components of $L$, then 
\[ (P+C_L(P))\cap (Q+C_L(Q))=P+Q+C_L(P)\cap C_L(Q),
\] since $P \subseteq C_L(Q)$ and $Q \subseteq C_L(P)$. It follows that $I \subseteq N^{\dagger}+C_L(E^{\dagger})$ and $I=N^{\dagger}+I\cap C_L(E^{\dagger})$.
\par

If 
\[ 0=N_0 \subset N_1 \subset \ldots \subset N_k=N
\] is part of a chief series for $L$ then $I \subseteq \cap_{i=1}^k C_L(N_i/N_{i-1})$, so $I$ acts nilpotently on $N$. Suppose that $N \subset I\cap C_L(E^{\dagger})$. Let $A/N$ be a minimal ideal of $L/N$ with $A \subseteq I\cap C_L(E^{\dagger})$. Then $A^2 \subseteq N$ or $A^2+N=A$. The former implies that $A \subseteq N$, since $A$ acts nilpotently on $N$, a contradiction. Hence $A=A^2+N \subseteq A^r+N$ for all $r \geq 1$. But now 
\[ [A,N]\subseteq [A^r+N,N] \subseteq N(\text{ad},A)^r+N^r,
\] so $[A,N]=0$, whence $A \subseteq C_L(E^{\dagger})\cap C_L(N)=C_L(N^{\dagger})=Z(N)$, by Theorem \ref{t:gennil}, a contradiction again. Thus $I\cap C_L(E^{\dagger})=N$ and $I=N^{\dagger}$.
\end{proof}
\medskip

We put
\begin{equation*}
I_{L}( A/B) =\{ x\in L \mid \text{ad}\,(x+B)|_{A/B}
=\text{ad}\,(a+B)|_{A/B}\text{ for some }a\in A\}.
\end{equation*}
The map ad$\,(x+B)|_{A/B}$ is called the inner derivation {\em induced} by $x$ on $A/B$. Then $I_L(A/B)=A+C_L(A/B)$, by \cite[Lemma 1.4 (i)]{tz}, so we have the following corollary.

\begin{coro}\label{c:der} Let $L$ be a Lie algebra over any field $F$. Then $N^{\dagger}$ is the set of all elements of $L$ which induce an inner derivation on every chief factor of $L$.
\end{coro}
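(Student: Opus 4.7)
The plan is to recognise this corollary as essentially a translation of Theorem \ref{t:cent} into the language of induced derivations on chief factors. The statement ``$x$ induces an inner derivation on every chief factor of $L$'' means precisely that for every chief factor $A/B$ of $L$, we have $\text{ad}\,(x+B)|_{A/B} = \text{ad}\,(a+B)|_{A/B}$ for some $a \in A$; by definition, this is the condition $x \in I_L(A/B)$. So the set of elements inducing an inner derivation on every chief factor is exactly
\[
\bigcap \{ I_L(A/B) \mid A/B \text{ is a chief factor of } L \}.
\]

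First, I would invoke \cite[Lemma 1.4 (i)]{tz} (quoted in the paragraph before the corollary) to rewrite each $I_L(A/B)$ as $A + C_L(A/B)$. This converts the above intersection into
\[
\bigcap \{ A + C_L(A/B) \mid A/B \text{ is a chief factor of } L \}.
\]
Then I would apply Theorem \ref{t:cent} directly, which identifies this intersection with $N^{\dagger}$. This gives the desired equality.

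The whole argument is thus a two-line chain of identifications, and there is no real obstacle beyond checking that the definition of $I_L(A/B)$ matches verbatim the phrase ``induces an inner derivation on $A/B$.'' Since \cite[Lemma 1.4 (i)]{tz} and Theorem \ref{t:cent} are already available, no further work is required.
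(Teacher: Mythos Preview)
Your proposal is correct and is exactly the paper's approach: the paper states the corollary immediately after noting that $I_L(A/B)=A+C_L(A/B)$ by \cite[Lemma 1.4 (i)]{tz}, so the result follows directly from Theorem \ref{t:cent}. There is nothing to add.
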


\section{The generalised nilradical of $L$}
We define the {\em generalised nilradical} of $L$, $N^*(L)$, by
\[ \frac{N^*(L)}{N}= \hbox{ Soc}_{L/N}\left(\frac{N+C_L(N)}{N} \right)
\] As usual we denote $N^*(L)$ simply by $N^*$. The following result shows that this is, in fact, the same as the quasi-nilpotent radical.

\begin{theor}\label{t:centgnil} Let $L$ be a Lie algebra with nilradical $N$ over any field. Then $N^*=N^{\dagger}$.
\end{theor}
\begin{proof} Put $C=C_L(N)$. Let $A/Z(A)$ be a minimal ideal of $L/Z(A)$ for which $A^2=A$. Then $Z(A) \subseteq A\cap N$, so $A\cap N=A$ or $A\cap N=Z(A)$. the former implies that $A\subseteq N$, which is a contradiction, so the latter holds. It follows that $(A+N)/N \cong A/A\cap N=A/Z(A)$, so $(A+N)/N$ is a minimal ideal of $L/N$. Moreover, $[A,N]=[A^2,N]\subseteq [A,[A,N]\subseteq [A,Z(A)]=0$, so $A\subseteq C$ and $(A+N)/N\subseteq N^*/N$. Hence $N^{\dagger} \subseteq N^*$.
\par

Now let $A/N$ be a minimal ideal of $L/N$ with $A\subseteq N+C$. Then $A=N+A\cap C$. Now $Z(A\cap C)=Z(N)$, by Lemma \ref{l:centre}, so $A/N \cong A\cap C/N\cap C=A\cap C/Z(N)=A\cap C/Z(A\cap C)$. It follows that $A\cap C/Z(A\cap C)$ is a minimal ideal of $ L/Z(A\cap C)$. Thus $(A\cap C)^2$ is a quasi-minimal ideal of $L$, by Lemma \ref{l:irred}. Moreover, $(A\cap C)^2+Z(N)=Z(N)$ or $A\cap C$. The former implies that $(A\cap C)^2 \subseteq Z(N)$, which yields that $(A\cap C)^3=0$ and $A\cap C\subseteq N$, a contradiction. Hence $A\cap C=(A\cap C)^2+Z(N)\subseteq N^{\dagger}$, and so $A\subseteq N^{\dagger}$. This shows that $N^*\subseteq N^{\dagger}$.
\end{proof}
\bigskip

This last result together with Theorem \ref{t:gennil} gives the following.

\begin{theor} Let $L$ be a Lie algebra over any field $F$. Then $L/Z(N)$ is isomorphic to a subalgebra of Der$(N^*)$, and $N^*/N$ is a direct sum of minimal ideals of $L/N$ which are simple or irregular.
\end{theor}
\begin{proof} The isomorphism results from the map $\theta : L \rightarrow$ Der$(N^*)$ given by $\theta(x)=$ad\,$x\mid_{N^*}$. Let $A/N$ be a minimal ideal of $L/N$ with $A\subseteq A+C$. The $A=N+A\cap C$ and, as in the second paragraph of the proof of Theorem \ref{t:centgnil}, $(A\cap C)^2$ is quasi-minimal in $L$, which implies that $A/N$ cannot be abelian. It follows from Corollary \ref{c:minp} that $A/N$ is simple or irregular.
\end{proof}

\begin{propo}\label{p:gensoc}  Let $L$ be a Lie algebra with nilradical $N$ over a field $F$, and suppose that $C_L(N)$ is nilregular in $L$. Then
\[ \frac{N^*}{N}= \hbox{ Soc}\left(\frac{N+C_L(N)}{N} \right).
\]
\end{propo}
\begin{proof} Put $C=C_L(N)$, $D=N+C$. Let $A/N$ be a minimal ideal of $D/N$. Then $A^2+N=N$ or $A$. The former implies that $A^2 \subseteq N$, whence $A^3 \subseteq [N,N+C] \subseteq N^2$, and an easy induction shows that $A^{n+1} \subseteq N^n =0$ for some $n \in \N$. It follows that $A$ is a nilpotent ideal of $D$, which is an ideal of $L$, and thus that $A \subseteq N(D)=N+N(C)\subseteq N$, by Theorem \ref{t:nil}, a contradiction. Hence $A=A^2+N$ and 
\[ [L,A] = [L,A^2+N] \subseteq [[L,A],A]+[L,N] \subseteq [D,A]+N\subseteq A,
\] so $A/N$ is a minimal ideal of $L/N$ inside $D/N$.
\par

Now suppose that $B/N$ is a minimal ideal of $L/N$ inside $D/N$, and let $A/N$ be a minimal ideal of $D/N$ inside $B/N$. Then, by the argument in the paragraph above, $A/N$ is an ideal of $L/N$, and so $A=B$. The result follows.
\end{proof}

\begin{propo}\label{p:char0} 
\begin{itemize}
\item[(i)] If $C_L(N)$ is regular and $\phi(L)\cap Z(N)=0$ then $N^*(L) = N(L)\oplus S$, where $S$ is the socle of a maximal semisimple ideal of $L$.
\item[(ii)] Over a field of characteristic zero, $N^*(L)= N(L)\oplus S=N(L)+C_L(N)$, where $S$ is the biggest semisimple ideal of $L$.
\end{itemize}
\end{propo}
\begin{proof} This follows from Corollary \ref{c:cent0}.
\end{proof}

\begin{propo}\label{p:phistar} Let $L$ be a Lie algebra over a field of characteristic zero and let $I\subseteq N^*(L)$ be an ideal of $L$. Then
\[ \frac{ N^*(L)}{I}\subseteq N^*\left(\frac{L}{I}\right).
\] 
\end{propo}
\begin{proof} This is a special case of Proposition \ref{p:quotient}.
\end{proof}
\bigskip

As a result of Example \ref{e:regular} we define, for each non-negative integer $n$, $N^*_n$, inductively by
\[ N^*_0(L)=L \text{ and } N^*_n=N^*(N^*_{n-1}(L)) \text{ for } n>0.
\]
Clearly the series
\[ L=N^*_0(L)\supseteq N^*_1(L)\supseteq \ldots
\]
will terminate in an equality, so we put $N^*_{\infty}(L)$ equal to the minimal subalgebra in this series. It is easy to see that $N^*_{\infty}(N^*_{\infty}(L))=N^*_{\infty}(L)$. Then we have

\begin{propo}\label{p:starseries}  Let $n\in \N\cup \{0\}$, and let $I$, $J$ be ideals of the Lie algebra $L$ over the field $F$. Then
\begin{itemize}
\item[(i)]  if $N^*_k(I)$ is a nilregular ideal of $N^*_k(L)$ then $N^*_{k+1}(I)$ is a characteristic ideal of $N^*_k(L)$ for $k\geq 0$;
\item[(ii)]  if $I \subseteq N^*_n(L))$ is an ideal of $L$ then $ N^*_{n+1}(L)/I\subseteq N^*_{n+1}(L/I)$.
\item[(iii)] if $L=I\oplus J$, then $N^*_k(L)=N^*_k(I)\oplus N^*_k(J)$ for all $k \geq 0$.
\end{itemize}
\end{propo}
\begin{proof} \begin{itemize}
\item[(i)] This follows from Theorem \ref{t:nil} (i) and Lemma \ref{l:mchar}.
\item[(ii)] The case $n=1$ is given by Proposition \ref{p:phistar}. So suppose that the case $n=k$ holds, where $k\geq 1$, and let $I\subseteq N^*_k(L)$. Then $I\subseteq N^*_{k-1}(L)$. Hence
\begin{align}
 \frac{N^*_{k+1}(L)}{I}=\frac{N^*(N^*_k(L))}{I} & \subseteq N^*\left(\frac{N^*_k(L)}{I}\right)  \nonumber\\
 & \subseteq N^*\left(N^*_k\left(\frac{L}{I}\right)\right)=N^*_{k+1}\left(\frac{L}{I}\right). \nonumber
\end{align}
The result now follows by induction
\item[(iii)] This is a straightforward induction proof: the case $k=1$ is given by Proposition \ref{p:sum}
\end{itemize}
\end{proof}

\begin{coro}\label{c:starseries}  Let $n\in \N$, and let $I$, $J$ be ideals of the Lie algebra $L$ over the field $F$. Then
\begin{itemize}
\item[(i)] if $N^*_{\infty}(I)$ is nilregular, it is a characteristic ideal of $N^*_{\infty}(L)$;
\item[(ii)] if $I\subseteq N^*_{\infty}(L)$ is an ideal of $L$ then $N^*_{\infty}(L)/I\subseteq N^*_{\infty}(L/I)$.
\item[(iii)] if $L=I\oplus J$, then $N^*_{\infty}(L)=N^*_{\infty}(I)\oplus N^*_{\infty}(J)$.
\end{itemize}
\end{coro}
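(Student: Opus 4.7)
The plan is to deduce each part from the corresponding assertion of Proposition \ref{p:starseries} by passing to an index $k$ large enough that all relevant series $N^*_\bullet(L)$, $N^*_\bullet(I)$, $N^*_\bullet(J)$ have stabilised. For (iii), pick $k$ past the stabilisation point of all three series and invoke Proposition \ref{p:starseries}(iii), giving $N^*_k(L) = N^*_k(I)\oplus N^*_k(J)$; the limit identity follows immediately.

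For (ii), since $I\subseteq N^*_\infty(L)\subseteq N^*_n(L)$ for every $n$, Proposition \ref{p:starseries}(ii) applies at each stage and yields $N^*_{n+1}(L)/I\subseteq N^*_{n+1}(L/I)$. Choosing $n$ large enough that simultaneously $N^*_{n+1}(L)=N^*_\infty(L)$ and $N^*_{n+1}(L/I)=N^*_\infty(L/I)$ delivers the desired inclusion.

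Part (i) is the main obstacle. The strategy is to prove by induction on $k$ that $N^*_k(I)$ is a nilregular ideal of $N^*_k(L)$, so that Proposition \ref{p:starseries}(i) can be fed back in at the next step. The inductive step combines Proposition \ref{p:ideal2}, which supplies the containment $N^*_{k+1}(I)\subseteq N^*_{k+1}(L)$, with the characteristic conclusion of Proposition \ref{p:starseries}(i), which forces $N^*_{k+1}(I)$ to be invariant under the adjoint action of $N^*_{k+1}(L)$ and hence an ideal of it. Once $k$ reaches a value $k_0$ with $N^*_{k_0}(I)=N^*_\infty(I)$ and $N^*_{k_0}(L)=N^*_\infty(L)$, one further application of Proposition \ref{p:starseries}(i) promotes $N^*_\infty(I)=N^*_{k_0+1}(I)$ to a characteristic ideal of $N^*_\infty(L)=N^*_{k_0}(L)$. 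The delicate point, which I expect to require the most care, is keeping nilregularity under control throughout the inductive chain: this is automatic in characteristic zero, and at and beyond the stable level it is supplied by the hypothesis on $N^*_\infty(I)$, leaving only the earlier steps in positive characteristic to be handled separately.
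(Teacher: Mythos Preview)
The paper gives no proof for this corollary; it is presented as an immediate consequence of Proposition~\ref{p:starseries}. Your arguments for (ii) and (iii) are exactly the right way to extract the limiting statements, and they match the paper's implicit reasoning.

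For (i), however, the gap you flag is genuine and your proposal does not close it. Your induction claims that $N^*_k(I)$ is a \emph{nilregular} ideal of $N^*_k(L)$ for every $k$, but the base case $k=0$ already fails: the hypothesis is that $N^*_\infty(I)$ is nilregular, not that $I$ is. Without nilregularity of $N^*_k(I)$ you cannot invoke Proposition~\ref{p:ideal2} to get the containment $N^*_{k+1}(I)\subseteq N^*_{k+1}(L)$, and you cannot invoke Proposition~\ref{p:starseries}(i) to get the ideal/characteristic conclusion. So the induction never starts in positive characteristic, and ``handled separately'' is not a proof. Even the weaker goal of first showing $N^*_\infty(I)\subseteq N^*_\infty(L)$ (so that a single application of Proposition~\ref{p:starseries}(i) at the stable level would finish) runs into the same obstruction: the only tool available for propagating containment, Proposition~\ref{p:ideal2}, needs nilregularity at every stage. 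The paper does not address this either, and as written the corollary seems to require the stronger running hypothesis that each $N^*_k(I)$ is nilregular (which is automatic in characteristic zero); but you should be explicit that the stated hypothesis alone does not suffice to run your induction.
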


\section{The quasi-nilpotent radical}
Here we construct a radical by adjoining the quasi-simple ideals of $L$ to the nilradical $N$. Since quasi-simple ideals are quasi-minimal they are characteristic in $L$.

\begin{lemma}\label{l:nirred} Let $L/Z(L)$ be simple. Then $L = L^2 + Z(L)$ and $L^2$ is quasi-simple.
\end{lemma}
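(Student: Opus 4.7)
The plan is to mimic the proof of Lemma \ref{l:irred} almost verbatim, specialising the ``minimal ideal of $L/Z(A)$'' hypothesis to the stronger ``$L/Z(L)$ is simple'' hypothesis. So I set $P=L^{2}$ and pass to the quotient $\overline{L}=L/Z(L)$.

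First I would show $L=L^{2}+Z(L)$. Since $\overline{P}$ is an ideal of the simple algebra $\overline{L}$, either $\overline{P}=\overline{0}$ or $\overline{P}=\overline{L}$. The first alternative would force $\overline{L}$ to be abelian, contradicting simplicity (simple Lie algebras being non-abelian by convention), so $\overline{P}=\overline{L}$, i.e.\ $L=L^{2}+Z(L)$.

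Next I would verify that $P=L^{2}$ is perfect. Using $L=P+Z(L)$,
\[
P^{2}=[L,L]=[P+Z(L),P+Z(L)]=[P,P]=P^{2},
\]
and more to the point $P=L^{2}=[P+Z(L),P+Z(L)]=[P,P]=P^{2}$, so $P^{2}=P$.

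Finally I would identify $Z(P)$ with $Z(L)\cap P$ and conclude simplicity of $P/Z(P)$. The inclusion $Z(L)\cap P\subseteq Z(P)$ is immediate, and conversely $[Z(P),L]=[Z(P),P+Z(L)]=[Z(P),P]=0$ gives $Z(P)\subseteq Z(L)\cap P$. Therefore
\[
P/Z(P)=P/(P\cap Z(L))\cong (P+Z(L))/Z(L)=L/Z(L),
\]
which is simple by hypothesis. Combined with $P^{2}=P$, this shows $L^{2}$ is quasi-simple. There is no real obstacle here; the one point worth checking carefully is ruling out the degenerate $\overline{L^{2}}=\overline{0}$ case, which relies on the convention that ``simple'' excludes the abelian (in particular one-dimensional) case.
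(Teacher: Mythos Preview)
Your proof is correct and follows essentially the same route as the paper's own proof: set $P=L^{2}$, use simplicity of $L/Z(L)$ to rule out $\overline{P}=\overline{0}$ and get $L=P+Z(L)$, deduce $P=P^{2}$, and then identify $P/Z(P)$ with $L/Z(L)$ via $Z(P)=P\cap Z(L)$. You are in fact slightly more explicit than the paper in justifying $Z(P)=P\cap Z(L)$, but there is no difference in strategy.
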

\begin{proof} Let $P = L^2$ and $\overline{L} = L/Z(L)$. Then $\overline{P}$ is an ideal of $\overline{L}$ and $\overline{L}$ is simple, so $\overline{P} = 0$ or $\overline{L}$. The former implies that $\overline{L}$ is abelian, a contradiction. Hence $\overline{P} = \overline{L}$, and so $L = P+Z(L)=L^2+Z(L)$. Also, $P=L^2=P^2$ and $P/Z(P)=P/P \cap Z(L) \cong (P+Z(L))/Z(L)=L/Z(L)$ is simple. 
\end{proof}

\begin{lemma}\label{l:sub1} Let $A$ be a quasi-simple ideal of $L$ and $B$ an ideal of $L$. Then either $A\subseteq B$ or $A \subseteq C_L(B)$.
\end{lemma}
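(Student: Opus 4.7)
The plan is to observe that this is essentially a corollary of Proposition \ref{p:sub}. The paper has already remarked (just after the definition of quasi-minimal) that every quasi-simple ideal is quasi-minimal, so the dichotomy established for quasi-minimal ideals transfers immediately to quasi-simple ones.

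Concretely, I would first recall why a quasi-simple ideal $A$ of $L$ is quasi-minimal in $L$: by definition $A^2=A$ and $A/Z(A)$ is simple as a Lie algebra, and since any ideal $I$ of $L$ with $Z(A)\subseteq I\subseteq A$ is in particular an ideal of $A$, simplicity of $A/Z(A)$ forces $I=Z(A)$ or $I=A$, i.e., $A/Z(A)$ is a minimal ideal of $L/Z(A)$. Then I would simply apply Proposition \ref{p:sub} to $A$ and $B$ to conclude that either $A\subseteq B$ or $A\subseteq C_L(B)$.

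If one preferred a self-contained argument rather than citing \ref{p:sub}, one could run the same two-line calculation directly: consider the ideal $(A\cap B+Z(A))/Z(A)$ of $L/Z(A)$ lying inside $A/Z(A)$; by simplicity of $A/Z(A)$ it is either all of $A/Z(A)$, in which case $A=A^2\subseteq A\cap B\subseteq B$, or trivial, in which case $A\cap B\subseteq Z(A)$ and then $[A,B]=[A^2,B]\subseteq [A,[A,B]]\subseteq [A,Z(A)]=0$, giving $A\subseteq C_L(B)$. There is no obstacle here; the lemma is really just a restatement of \ref{p:sub} for the narrower class of quasi-simple ideals, and its sole purpose appears to be to set up the parallel development of $E(L)$ (built from quasi-simple ideals) in this section that mirrors the construction of $E^{\dagger}(L)$ in Section 3.
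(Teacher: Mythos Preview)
Your proposal is correct and matches the paper's proof exactly: the paper simply notes that quasi-simple ideals are quasi-minimal and invokes Proposition~\ref{p:sub}. Your optional self-contained argument is just the content of that proposition specialised to this case, so there is nothing to add.
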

\begin{proof} Since quasi-simple ideals are quasi-minimal the result follows from Proposition \ref{p:sub}. 
\end{proof}
\bigskip

The {\em quasi-simple components} of $L$ are its quasi-simple ideals. We will write SComp($L$) for the set of quasi-simple components of $L$, and put $\hat{E}(L) = <$SComp$(L)>$, the subalgebra generated by the quasi-simple components of $L$. Clearly SComp$(L) \subseteq$ MComp$(L)$, $\hat{E}(L) \subseteq E^{\dagger}(L)$ and $\hat{E}(L)$ is characteristic in $L$.

\begin{lemma}\label{l:sub2} If $B$ is an ideal of $L$, then SComp($B) =$ SComp$(L) \cap B$.
\end{lemma}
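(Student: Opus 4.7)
The plan is a direct two-inclusion argument, and unlike Lemma~\ref{l:sub} for quasi-minimal components (which required $B$ to be regular), no such hypothesis should be needed here. The reason is that quasi-simplicity already forces perfectness, and perfectness is exactly the hypothesis that makes Lemma~\ref{l:char} applicable, so the obstruction that required regularity in the quasi-minimal setting simply does not arise.

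For the inclusion SComp$(B)\subseteq{}$SComp$(L)\cap B$, I would take a quasi-simple ideal $A$ of $B$. Since $A^{2}=A$ and $A$ is a subideal of $L$ via the chain $A\lhd B\lhd L$, Lemma~\ref{l:char} immediately gives that $A$ is a characteristic ideal of $L$, and in particular an ideal of $L$. The defining conditions of quasi-simplicity, namely $A^{2}=A$ and simplicity of $A/Z(A)$, refer only to the internal structure of $A$ as a Lie algebra and are therefore unaffected by enlarging the ambient algebra from $B$ to $L$. Hence $A\in{}$SComp$(L)$, and of course $A\subseteq B$, giving $A\in{}$SComp$(L)\cap B$.

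For the reverse inclusion SComp$(L)\cap B\subseteq{}$SComp$(B)$, the argument is essentially tautological: if $A$ is a quasi-simple ideal of $L$ contained in $B$, then $A$ is automatically an ideal of $B$, and quasi-simplicity of $A$ is again intrinsic, so $A\in{}$SComp$(B)$.

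There is no real obstacle; the only step that does any actual work is the appeal to Lemma~\ref{l:char} in the forward direction, and this is precisely the reason the statement for quasi-simple components is cleaner than its quasi-minimal analogue (where Example~\ref{e:pasha} showed the inclusion could be strict without regularity).
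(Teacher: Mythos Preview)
Your proof is correct and follows essentially the same approach as the paper: the forward inclusion uses that a quasi-simple ideal of $B$ is perfect and hence, via Lemma~\ref{l:char}, an ideal of $L$ (the paper phrases this as ``characteristic in $B$'' but the content is identical), while the reverse inclusion is immediate since quasi-simplicity is an intrinsic property.
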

\begin{proof} If $A$ is a quasi-simple ideal of B, it is an ideal of $L$ since it is characteristic in $B$, and so SComp$(B)\subseteq$ SComp$(L)\cap B$. The reverse inclusion is clear.
\end{proof}

\begin{propo}\label{p:comp} Let $P \in$ SComp$(L)$ and let $B$ be an ideal of $L$. Then $P \in$ SComp$(B)$ or $[P,B]=0$.
\end{propo}
\begin{proof} Suppose that $[P,B] \neq 0$. We have that $P$ is a quasi-simple ideal of $L$, so $P \subseteq B$, by Lemma \ref{l:sub1}. Hence $P \in$ SComp$(B)$, by Lemma \ref{l:sub2}. 
\end{proof}

\begin{coro}\label{c:scomp} Distinct quasi-simple components of $L$ commute, so \[ \hat{E}(L) = \sum_{P\in SComp(L)}P,\]
where $[P,Q]=0$ and $P\cap Q \subseteq Z(R)$ for all $P,Q \in$  SComp$(L)$.
\end{coro}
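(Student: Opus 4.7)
\begin{proof}
Take two distinct quasi-simple components $P, Q$ of $L$. Applying Proposition \ref{p:comp} with $B=Q$ gives $P\in$ SComp$(Q)$ or $[P,Q]=0$. Suppose we are in the first case. Then $P$ is an ideal of $Q$, so $(P+Z(Q))/Z(Q)$ is an ideal of the simple algebra $Q/Z(Q)$. Hence either $P\subseteq Z(Q)$, which forces $P=P^2\subseteq [Z(Q),Q]=0$ and contradicts $P^2=P$, or $P+Z(Q)=Q$. In the latter case $Q=Q^2=(P+Z(Q))^2=P^2=P$, contradicting $P\neq Q$. Therefore $[P,Q]=0$ for all distinct $P,Q\in$ SComp$(L)$, and so $\hat{E}(L)=\sum_{P\in SComp(L)}P$.

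For the final assertion, fix distinct $P,Q\in$ SComp$(L)$. Since $[P,Q]=0$ we have $P\cap Q\subseteq Z(P)\cap Z(Q)$. Now $Z(P)$ is an abelian characteristic ideal of the ideal $P$ of $L$, hence an abelian ideal of $L$, so $Z(P)\subseteq N\subseteq R$. On the other hand, quasi-simple components are quasi-minimal, so $\hat{E}(L)\subseteq E^{\dagger}(L)\subseteq C_L(R)$ by Corollary \ref{c:cent}; in particular $[P,R]=0$, which gives $[P\cap Q,R]=0$. Combining these, $P\cap Q\subseteq R$ and $P\cap Q$ centralises $R$, so $P\cap Q\subseteq Z(R)$.
\end{proof}

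The only real content is the dichotomy in the first paragraph; the rest is a direct transcription of the argument used for Corollary \ref{c:comp}, with quasi-minimality replaced by quasi-simplicity. The one place where care is required is ruling out $P\in$ SComp$(Q)$: the earlier quasi-minimal version admits this possibility, but the stronger simplicity of $Q/Z(Q)$ forces $P=Q$, which is the mild obstacle here.
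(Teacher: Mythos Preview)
Your proof is correct. The paper's route is shorter: since quasi-simple ideals are quasi-minimal, $\mathrm{SComp}(L)\subseteq\mathrm{MComp}(L)$ and Corollary~\ref{c:comp} applies verbatim to give both $[P,Q]=0$ and $P\cap Q\subseteq Z(R)$, so your detour through Proposition~\ref{p:comp} and the explicit use of the simplicity of $Q/Z(Q)$ to rule out $P\in\mathrm{SComp}(Q)$, while valid, is unnecessary.
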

\begin{proof} This follows easily as in Corollary \ref{c:comp}.
\end{proof}

\begin{theor}\label{t:equiv} Suppose that $L$ is a Lie algebra in which $E^{\dagger}(L)$ is regular, then $\hat{E}(L)=E^{\dagger}(L)$.
\end{theor}
\begin{proof} Let $P$ be a quasi-simple ideal of $L$. Then $N(P)$ and $R(P)$ are ideals of $E^{\dagger}(L)$, by Corollary \ref{c:scomp}. It follows that $P$ is a regular ideal of $L$ and the result follows from Corollary \ref{c:equiv}.
\end{proof}
\bigskip

Clearly, if $L$ is as in Example \ref{e:pasha} we have $\hat{E}(L)=0\neq A=E^{\dagger}(L)$, so Theorem \ref{t:equiv} does not hold for all Lie algebras.

\begin{coro}\label{c:ssoc} Let $L$ be a Lie algebra in which $E^{\dagger}(L)$ and $C_L(N)$ are regular. Put $Z=Z(N)$, $\overline{L}=L/Z$, $\overline{S}=Soc(\overline{C_L(N)})$. Then $\hat{E}(L)=S^2$ and $S=\hat{E}(L)+Z$.
\end{coro}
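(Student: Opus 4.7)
The plan is to deduce this corollary by simply combining Proposition \ref{p:soc} with Theorem \ref{t:equiv}, since both hypotheses of those results are in force.

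First, since $C_L(N)$ is assumed regular, Proposition \ref{p:soc} applies directly and gives us the two identities $E^{\dagger}(L) = S^2$ and $S = E^{\dagger}(L) + Z$, with $\overline{S} = \mathrm{Soc}(\overline{C_L(N)})$ as defined. That already handles the ``$E^{\dagger}$-version'' of both claimed equalities.

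Second, since $E^{\dagger}(L)$ is assumed regular, Theorem \ref{t:equiv} tells us that $\hat{E}(L) = E^{\dagger}(L)$. Substituting this equality into the two conclusions of the previous paragraph yields $\hat{E}(L) = S^2$ and $S = \hat{E}(L) + Z$, which is exactly what we want.

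There is no real obstacle here: the work was already done in Proposition \ref{p:soc} (to express $E^{\dagger}$ and $S$ in terms of each other) and in Theorem \ref{t:equiv} (to identify $\hat{E}$ with $E^{\dagger}$ under regularity). The only thing to double-check is that the two regularity hypotheses together are genuinely what is needed to invoke both results simultaneously, which is immediate from their statements.
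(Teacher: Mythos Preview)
Your proof is correct and matches the paper's approach exactly: the paper simply states that the corollary follows from Proposition \ref{p:soc} and Theorem \ref{t:equiv}, which is precisely the combination you invoke.
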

\begin{proof} This follows from Proposition \ref{p:soc} and Theorem \ref{t:equiv}.
\end{proof}
\bigskip

We define the {\em quasi-nilpotent radical} of $L$ to be $\hat{N}(L) = N+\hat{E}(L)$. From now on we will denote $\hat{N}(L)$ simply by $\hat{N}$. The following is an immediate consequence of Theorems \ref{t:gennil} and \ref{t:equiv}.

\begin{coro}\label{c:gennil2} Suppose that $L$ is a Lie algebra in which $N^{\dagger}(L)$ is regular. Then $C_L(\hat{N})=Z(N)$. In particular $C_L(\hat{N}) \subseteq \hat{N}$.
\end{coro}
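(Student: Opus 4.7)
The plan is to reduce everything to Theorem \ref{t:gennil} by establishing $\hat{N} = N^{\dagger}$ under the stated hypothesis. Since $\hat{N} = N + \hat{E}(L)$ and $N^{\dagger} = N + E^{\dagger}(L)$, it suffices to show $\hat{E}(L) = E^{\dagger}(L)$, which is exactly what Theorem \ref{t:equiv} delivers provided $E^{\dagger}(L)$ is regular.

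The single nontrivial step is therefore to upgrade the regularity hypothesis from $N^{\dagger}$ down to $E^{\dagger}$. First I would note that by Corollary \ref{c:cent}, $E^{\dagger}(L) \subseteq C_L(R) \subseteq C_L(N)$, so every subalgebra of $E^{\dagger}$ commutes with $N$. In particular $[N,N(E^{\dagger})] = 0$ and $[N,R(E^{\dagger})] = 0$. Combined with the fact that $N(E^{\dagger})$ and $R(E^{\dagger})$ are ideals of $E^{\dagger}$, this gives
\[
[N^{\dagger}, N(E^{\dagger})] = [N + E^{\dagger}, N(E^{\dagger})] \subseteq N(E^{\dagger}),
\]
and similarly $[N^{\dagger}, R(E^{\dagger})] \subseteq R(E^{\dagger})$. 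Hence $N(E^{\dagger})$ is a nilpotent ideal of $N^{\dagger}$, so $N(E^{\dagger}) \subseteq N(N^{\dagger})$, while $R(E^{\dagger})$ is a solvable ideal of $N^{\dagger}$, so $R(E^{\dagger}) \subseteq R(N^{\dagger})$. Since nilpotency class and derived length can only decrease on passing to subalgebras of a nilpotent (respectively solvable) Lie algebra, whichever regularity bound $N^{\dagger}$ satisfies is inherited by $E^{\dagger}$.

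With $E^{\dagger}(L)$ regular, Theorem \ref{t:equiv} gives $\hat{E}(L) = E^{\dagger}(L)$, and therefore $\hat{N} = N^{\dagger}$. Theorem \ref{t:gennil} then yields $C_L(\hat{N}) = C_L(N^{\dagger}) = Z(N)$, and since $Z(N) \subseteq N \subseteq \hat{N}$ the containment $C_L(\hat{N}) \subseteq \hat{N}$ is immediate.

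The only obstacle worth remarking on is the regularity transfer above: the apparent circularity (characteristic-ness of $N(E^{\dagger})$ in $E^{\dagger}$ would normally require $E^{\dagger}$ to be nilregular, which is what one is trying to prove) is bypassed because Corollary \ref{c:cent} already forces $N$ to centralise $E^{\dagger}$, making $N(E^{\dagger})$ and $R(E^{\dagger})$ ideals of $N^{\dagger}$ for free. Everything else in the argument is a formal unpacking of the cited theorems.
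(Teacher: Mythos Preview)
Your proof is correct and follows the same route as the paper, which simply states that the corollary is ``an immediate consequence of Theorems \ref{t:gennil} and \ref{t:equiv}''. You correctly identify and fill in the one point the paper leaves implicit, namely why regularity of $N^{\dagger}$ descends to $E^{\dagger}$ so that Theorem \ref{t:equiv} actually applies; your argument via Corollary \ref{c:cent} (forcing $[N,E^{\dagger}]=0$ so that $N(E^{\dagger})$ and $R(E^{\dagger})$ become ideals of $N^{\dagger}$ without any circular appeal to characteristic-ness) is exactly the right way to justify that step.
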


Once more, Example \ref{e:pasha} shows that the above result does not hold without some restrictions. For, if $L$ is as in that example, then $\hat{N}(L)=0$ and $C_L(\hat{N}(L))=L$.

\begin{propo}\label{p:hideal} Let $L$ be a Lie algebra a field $F$, and let $B$ be a nilregular ideal of $L$. Then $\hat{N}(B) \subseteq \hat{N}$.
\end{propo}
\begin{proof} Under the given hypotheses $N(B)$ is a characteristic ideal of $B$ (see \cite{mak}), so $N(B) \subseteq N$. Moreover, $\hat{E}(B) \subseteq \hat{E}(L)$ by Lemma \ref{l:sub2}.
\end{proof}

\begin{propo}\label{p:hdouble} Let $L$ be a Lie algebra over any field. Then $\hat{N}(\hat{N})=\hat{N}$.
\end{propo}
\begin{proof} Clearly $\hat{N}(\hat{N}) \subseteq \hat{N}$. But $\hat{E}(\hat{N})=\hat{E}(L)$, by Lemma \ref{l:sub2}, and, clearly, $N \subseteq N(\hat{N})$, giving the reverse inclusion.
\end{proof}

\begin{propo}\label{p:phihat}  Let $L$ be a Lie algebra over any field, and let $I$ be an ideal of $L$. Then 
\[ \frac{\hat{N}(L)+I}{I}\subseteq \hat{N}\left(\frac{L}{I}\right).
\]
\end{propo}
\begin{proof} This follows exactly as in Proposition \ref{p:quotient}.
\end{proof}
\bigskip

\section{Another generalisation of the nilradical}
We put $\tilde{N}(L)/\phi(L)=$ Soc$(L/\phi(L))$. We write $\tilde{N}(L)$ simply as $\tilde{N}$. Then we see that this radical also has our desired property.

\begin{theor}\label{t:centhat} Let $L$ be a Lie algebra over any field, with nilradical $N$. Then $C_L(\tilde{N}) \subseteq Z(N) \subseteq \tilde{N}$.
\end{theor}
\begin{proof} Put $C=C_L(\tilde{N})$. Suppose first that $\phi(L)=0$. Then $L=N \dot{+} U$ where $N=$ Asoc$L$ and $U$ is a subalgebra of $L$, by \cite[Theorems 7.3 and 7.4]{frat}. Then $C=N\dot{+}C \cap U$ and $C \cap U$ is an ideal of $L$. Suppose that $C \cap U \neq 0$ and let $A$ be a minimal ideal of $L$ with $A \subseteq C \cap U$. Then $A \subseteq \tilde{N}$, so $A^2\subseteq[\tilde{N},C]=0$. Hence $A \subseteq N \cap U=0$, a contradiction. It follows that $C=N$.
\par

If $\phi(L) \neq 0$ we have 
\[ \frac{C+ \phi(L)}{\phi(L)} \subseteq C_{L/\phi(L)} \left(\frac{\tilde{N}}{\phi(L)}\right) \subseteq \frac{N}{\phi(L)}.
\] Hence $C \subseteq N$, which yields $C \subseteq Z(N)$.
\end{proof}

\begin{theor}\label{t:phifree} Let $L$ be a $\phi$-free Lie algebra over any field $F$ and suppose that $\tilde{N}(L)$ is nilregular. Then $L/C_L(\tilde{N}(L))$ is isomorphic to a subalgebra of
\[  \mathcal{M}_r^- \oplus \left(\bigoplus_{i=1}^s \text{Der}\, (A_i)\right)
\] where $\mathcal{M}_r$ is the set of $r \times r$ matrices over $F$, $r$ is the dimension of the nilradical, and $A_1, \ldots, A_s$ are the simple minimal ideals of $L$.
\end{theor}
\begin{proof} Since $L$ is $\phi$-free we have that $\tilde{N}(L)=N(L)\oplus (\oplus_{i=1}^r A_i)$ where $A_1, \ldots, A_r$ are the non-abelian minimal ideals of $L$. Also, each $A_i$ is nilregular and hence simple, by Corollary \ref{c:minp}. The map $\theta : L \rightarrow$ Der\,$(\tilde{N}(L))$ given by $\theta(x)=$ ad\,$x\mid_{\tilde{N}(L)}$ is a homomorphism with kernel $C_L(\tilde{N}(L))$. But $N(L)$ is characteristic, since it is nilregular, and the $A_i$'s are characteristic, since they are perfect, so 
\[  \text{Der}\,(\tilde{N}(L))=\text{Der}\,(N(L))\oplus \left(\bigoplus_{i=1}^s \text{Der}\, (A_i)\right),
\] whence the result.
\end{proof}

\begin{propo}\label{p:equal} $N^* \subseteq \tilde{N}$. 
\end{propo}
\begin{proof} There is a subalgebra $U/\phi(L)$ of $L/\phi(L)$ such that $L/\phi(L)=N/\phi(L)\dot{+}U/\phi(L)$, by \cite[Theorems 7.3 and 7.4]{frat}. Let $A/N$ be a minimal ideal of $L/N$ with $A \subseteq N+C_L(N)$. Then $A=N\dot{+}A \cap U$, so $[N,A]=[N,N+A\cap C]\subseteq \phi(L)$ and $A \cap U/\phi(L)$ is a minimal ideal of $L/\phi(L)$. Moreover, $N/\phi(L) \subseteq $ Soc$(L/\phi(L))$, by \cite[Theorem 7.4]{frat}. Hence $A/N \subseteq$ Soc$(L/\phi(L))$, and so $N^* \subseteq \tilde{N}$.
\end{proof}
\bigskip

In general we can have $N^* \subset \tilde{N}$ and $\tilde{N}(\tilde{N})\subset \tilde{N}$, as we will show below. Recall that the category $\mathcal{O}$ is a mathematical object in the representation theory of semisimple Lie algebras. It is a category whose objects are certain representations of a semisimple Lie algebra and morphisms are homomorphisms of representations. The formal definition and its properties can be found in \cite{humph}. As in other artinian module categories, it follows from the existence of enough projectives that each $M \in \mathcal {O}$ has a {\em projective cover} $\pi: P \rightarrow M$. Here $\pi$ is an epimorphism and is essential, meaning that no proper submodule of the projective module $P$ is mapped onto $M$. Up to isomorphism the module $P$ is the unique projective having this property (see \cite[page 62]{humph}).

\begin{ex}\label{e:proj} So let $S$  be a finite-dimensional simple Lie algebra over a field $F$ of prime characteristic, let $P$ be the projective cover for the trivial irreducible $S$-module and let $R$ be the radical of $P$. Then $R$ is a faithful irreducible $S$-module and $P/R$ is the trivial irreducible $S$-module. Let $T = P\rtimes S$ be the semidirect sum of $P$ and $S$. Then $T^2=R\rtimes S$ is a primitive Lie algebra of type $1$ and $\dim(T/T^2)=1$, say $T=T^2+Fx$. Put $L=T+Fy$ where $[x,y]=y$ and $[T^2,y]=0$.
\par

Then $\phi(T) \subseteq T^2$, so $\phi(T)$ is an ideal of $L$ and $\phi(T) \subseteq \phi(L)$, by \cite[Lemma 4.1]{frat}. But $\phi(L) \subseteq T$ and, if $M$ is a maximal subalgebra of $T$ then $M+Fy$ is a maximal subalgebra of $L$, so $\phi(L)=\phi(T)=R$.  Also $Soc(L/R)=(T^2+Fy)/R$, so $\tilde{N}(L)=T^2\oplus Fy$. However, $N(L)=R\oplus Fy$ and $C_L(N(L))=N(L)$, so $N^*(L)=N(L)\neq \tilde{N}(L)$.
\par

Moreover, $\phi(\tilde{N}(L))=0$, so $\tilde{N}(\tilde{N}(L))=Soc(\tilde{N}(L))=R\oplus Fy\neq \tilde{N}(L)$.
\par

Notice that we also have $N^*(L)/\phi(L)=N(L)/R\cong Fy$, whereas \newline $N^*(L/\phi(L)) = T^2+Fy/R$. Hence the inclusions in Propositions \ref{p:quotient}, \ref{p:phistar}, \ref{p:starseries} and Corollary \ref{c:starseries} can be strict.
\end{ex}

Note that a similar example can be constructed in characteristic $p$. Let $L$ be a finite-dimensional restricted Lie algebra over a field $F$ of prime characteristic,
and let $u(L)$ denote the restricted universal enveloping algebra of L. Then every restricted $L$-module is a $u(L)$-module and vice versa, and so there is a bijection between the irreducible restricted $L$-modules and the irreducible $u(L)$-modules. In particular, as $u(L)$ is finite-dimensional, every irreducible restricted $L$-module is finite-dimensional. So, in the above example we could take $S$ to be a restricted simple Lie algebra, as the projective cover of the trivial $S$-module again exists.

\begin{propo}\label{p:factor} If $I$ is an ideal of $L$ then 
\[ \frac{\tilde{N}+I}{I} \subseteq \tilde{N} \left(\frac{L}{I}\right).
\] Moreover, if $I\subseteq \phi(L)$, then $\tilde{N}(L)/I=\tilde{N}(L/I)$.
\end{propo}
\begin{proof} Let $A/\phi(L)$ be a minimal ideal of $L/\phi(L)$. Then
\[ \frac{A+I/I}{\phi(L)+I/I}\cong \frac{A+I}{\phi(L)+I}\cong \frac{A}{A\cap (\phi(L)+I)}.
\] Now $\phi(L)\subseteq A\cap (\phi(L)+I)$, so $A\cap (\phi(L)+I)=A$ or $\phi(L)$. But $\phi(L)+I/I \subseteq \phi(L/I)$, so the former implies that $A+I/I=\phi(L/I)$ and $A+I/I\subseteq \tilde{N}(L/I)$. If the latter holds then $A\cap I\subseteq \phi(L)$. But now, $\phi(L)/A\cap I=\phi(L/A\cap I)$, by \cite[Proposition 4.3]{frat}, so 
\[ \frac{A/A\cap I}{\phi(L/A\cap I)}= \frac{A/A\cap I}{\phi(L)/A\cap I}\cong \frac{A}{\phi(L)}.
\] It follows that $A/A\cap I \subseteq \tilde{N}(L/A\cap I)$, whence $A+I/I\subseteq \tilde{N}(L/I)$.
\par

The second assertion follows from the definition of $\tilde{N}$ and the fact that $\phi(L/I)=\phi(L)/I$.
\end{proof}

\begin{propo}\label{p:phifactor3} $\tilde{N}(L)/\phi(L)= N^*(L/\phi(L))$.
\end{propo}
\begin{proof} Suppose first that $\phi(L)=0$. Then $\tilde{N}(L)$ is the socle of $L$. Now $N(L)=$ Asoc$(L)$, by \cite[Theorem 7.4]{frat}. Also, if $A$ is a minimal ideal of $L$ with $A\not \subseteq N(L)=N$, then $[A,N]\subseteq A\cap N=0$, so $A\subseteq C_L(N)$. Hence $\tilde{N}(L)\subseteq N^*(L)$.
\par

If $\phi(L)\neq 0$ the above shows that $\tilde{N}(L/\phi(L))\subseteq N^*(L/\phi(L))$. The result now follows from Propositions \ref{p:equal} and  \ref{p:factor}.
\end{proof}

\begin{propo}\label{p:tildesum} If $L=I\oplus J$, then $\tilde{N}(L)=\tilde{N}(I)\oplus \tilde{N}(J)$. 
\end{propo}
\begin{proof} We have that $N(L)=N(I)\oplus N(J)$ and $\phi(L)=\phi(I)\oplus \phi(J)$ by \cite[Theorem 4.8]{frat}. Let $A/\phi(L)$ be a minimal ideal of $L/\phi(L)$ and suppose that $A \not \subseteq N(L)$. Then $A=A^2+\phi(L)$. But $\phi(L)=\phi(I)\oplus \phi(J)$, by \cite[Theorem 4.8]{frat}, so 
\[A=A^2+\phi(I)+\phi(J)=[A,I]+\phi(I)+[A,J]+\phi(J).
\] Hence
\[ \frac{A}{\phi(L)}\cong \frac{[A,I]+\phi(I)}{\phi(I)}\oplus \frac{[A,J]+\phi(J)}{\phi(J)}.
\]
It is easy to see that the direct summands are minimal ideals of $I/\phi(I)$ and $J/\phi(J)$ respectively, so $\tilde{N}(L)\subseteq \tilde{N}(I)\oplus \tilde{N}(J)$. Also, if $A/\phi(I)$ is a minimal ideal of $I/\phi(I)$, then $A+\phi(J)/\phi(L)$ is a minimal ideal of $L/\phi(L)$, so $\tilde{N}(I)\subseteq \tilde{N}(L)$. Similarly $\tilde{N}(J))\subseteq \tilde{N}(L)$, which gives the result.
\end{proof}
\bigskip

As a result of Example \ref{e:proj} we define, for each non-negative integer $n$, $\tilde{N}_n(L)$ inductively by
\[ \tilde{N}_0(L)=L \text{ and } \tilde{N}_n(L)=\tilde{N}(\tilde{N}_{n-1}(L)) \text{ for } n>0.
\]
Clearly the series 
\[ L=\tilde{N}_0(L) \supseteq \tilde{N}_1(L) \supseteq \ldots
\]
will terminate in an equality, so we put $\tilde{N}_{\infty}(L)$ equal to the minimal subalgebra in this series. It is easy to see that $\tilde{N}_{\infty}(\tilde{N}_{\infty}(L))=\tilde{N}_{\infty}(L)$.

\begin{propo}\label{p:prop} Let $n\in \N\cup \{0\}$, and let $I$, $J$ be ideals of the Lie algebra $L$ over a field $F$. 
\begin{itemize}
\item[(i)] If $I \subseteq \phi(\tilde{N}_{n-1}(L))$ then $\tilde{N}_n(L/I)=\tilde{N}_n(L)/I$.
\item[(ii)] $N(\tilde{N}_n(L))\subseteq N(\tilde{N}_{n+1}(L))$ for each $n\geq 0$.
\item[(iii)] If $\tilde{N}_{\infty}(L)$ is nilregular, then $\phi(\tilde{N}_{n+1}(L))\subseteq \phi(\tilde{N}_n(L))$ for each $n \geq 0$.
\item[(iv)] If $\tilde{N}_{\infty}(L)$ is nilregular then $N(\tilde{N}_n(L))=N(L)$ and $\tilde{N}_n(L)$ is an ideal of $L$ for all $n\geq 0$.
\item[(v)] If $N^*(L)$ is nilregular then $N^*(L) \subseteq \tilde{N}_n(L)$ for each $n\geq 0$.
\item[(vi)] If $\tilde{N}_n(L)$ is nilregular and $ \phi(\tilde{N}_n(L))=0$ then $\tilde{N}_{n+1}(L)=N^*(L)$.
\item[(vii)]  If $N^*(L)$ is nilregular then $C_L(\tilde{N}_n(L))=Z(N(L))$.
\item[(viii)] If $F$ has characteristic zero, then $\tilde{N}_n(I)\subseteq \tilde{N}_n(L)$.
\item[(ix)] If $F$ has characteristic zero, then $\tilde{N}_n(L)+I/I\subseteq \tilde{N}_n(L/I)$.
\item[(x)] If $L=I\oplus J$ then $\tilde{N}_n(L)=\tilde{N}_n(I)\oplus \tilde{N}_n(J)$.
\end{itemize}
\end{propo}
\begin{proof} \begin{itemize}
\item[(i)] The case $n=1$ is given by Proposition \ref{p:factor}. A straightforward induction argument then yields the general case.
\item[(ii)] We have that $N(L)\subseteq \tilde{N}(L)$, by \cite[Theorem 7.4]{frat}, whence $N(L)\subseteq N(\tilde{N}(L))$. Thus $N(\tilde{N}(L))$ $\subseteq N(\tilde{N}_2(L))$, and a simple induction argument gives the general result.
\item[(iii)] Put $\tilde{N}_i=\tilde{N}_i(L)$. Then
\[ \frac{\tilde{N}_{n+1}}{\phi(\tilde{N}_n)}= \bigoplus_{i=1}^r\frac{A_i}{\phi(\tilde{N}_n)},
\] where each direct summand is a minimal ideal of $\tilde{N}_n/\phi(\tilde{N}_n)$. Now
\[ N\left(\frac{A_i}{\phi(\tilde{N}_n)}\right)\subseteq  N\left(\frac{\tilde{N}_n}{\phi(\tilde{N}_n)}\right)=\frac{N(\tilde{N}_n)}{\phi(\tilde{N}_n)}
\] and $N(\tilde{N}_n)\subseteq N(\tilde{N}_{\infty})$ by (ii), so the direct summands are nilregular, and hence are abelian or simple, by Corollary \ref{c:minp}. It follows that they are $\phi$-free, and thus, so is $\tilde{N}_{n+1}/\phi(\tilde{N}_n)$. The result follows.
\item[(iv)] Consider the first assertion: it clearly holds for $n=0$. Suppose that $\tilde{N}_{\infty}(L)$ is nilregular and that the result holds for $k\leq n$ ($n\geq 0$). Then $\tilde{N}_k(L)$ is nilregular for all $k\geq 0$, by (ii). It follows from \cite[Corollary 1]{pet} that $N(\tilde{N}_n(L))$ is a characteristic ideal of $\tilde{N}_n(L)$, and hence an ideal of $\tilde{N}_{n-1}(L)$. Thus $N(\tilde{N}_n(L))=N(\tilde{N}_{n-1}(L))$, and so $N(\tilde{N}_n(L))=N(L)$ by the inductive hypothesis, which proves the first assertion.
\par

Put $\tilde{N}_n=\tilde{N}_n(L)$, $\phi_n=\phi(\tilde{N}_n)$ and let $A/\phi_n$ be a minimal ideal of $\tilde{N}_n/\phi_n$. If $A \not \subseteq N(\tilde{N}_n)$, then $A/\phi_n$ is a perfect subideal of $L/\phi_n$ and so an ideal of $L/\phi_n$, by Lemma \ref{l:char}. The result follows.
\item[(v)] The case $n=1$ is Proposition \ref{p:equal}. So suppose that $N^*(L) \subseteq \tilde{N}_k(L)$ for some $k \geq 1$. Then 
\[N^*(L)=N^*(N^*(L))\subseteq N^*(\tilde{N}_k(L))\subseteq \tilde{N}_{k+1}(L),
\] by Propositions \ref{p:double}, \ref{p:ideal2} and \ref{p:equal}.
\item[(vi)] If  $ \phi(\tilde{N}_n(L))=0$ then 
\[ \tilde{N}_{n+1}(L) \subseteq N^*(\tilde{N}_n(L))\subseteq N^*(L)\subseteq \tilde{N}_{n+1}(L), 
\] since $\tilde{N}_n(L)$ is nilregular (and hence so is $N^*(L)$), by Propositions \ref{p:phifactor3}, \ref{p:ideal2} and (v) above.
\item[(vii)] Using (v) above we have that $C_L(\tilde{N}_n(L))\subseteq C_L(N^*(L))=Z(N)$, by Theorem \ref{t:gennil}.
\item[(viii)] We have $\phi(I)\subseteq \phi(L)$, by \cite[Corollary 3.3]{frat}, so $\tilde{N}(L/\phi(I))=\tilde{N}(L)/\phi(I)$. Now
\[ \tilde{N}(I)/\phi(I)=N^*(I/\phi(I)\subseteq N^*(L/\phi(I)) \subseteq \tilde{N}(L/\phi(I))=\tilde{N}(L)/\phi(I),
\] by Propositions \ref{p:phifactor3}, \ref{p:ideal2} and \ref{p:equal}. Hence $\tilde{N}(I)\subseteq \tilde{N}(L)$. Then a simple induction proof shows that $\tilde{N}_n(I)\subseteq \tilde{N}_n(L)$.
\item[(ix)]The case $n=1$ is given by Proposition \ref{p:factor}. Suppose it holds for some $k\geq 1$. Then
\begin{align}& \frac{\tilde{N}_{k+1}(L)+I}{I} =\frac{\tilde{N}(\tilde{N}_k(L))+I}{I}\subseteq \frac{\tilde{N}(\tilde{N}_k(L)+I)+I}{I} \nonumber \\
 & \subseteq \tilde{N}\left(\frac{\tilde{N}_k(L)+I}{I}\right)\subseteq \tilde{N}\left(\tilde{N}_k\left(\frac{L}{I}\right)\right)=\tilde{N}_{k+1}\left(\frac{L}{I}\right), \nonumber
\end{align}
by (viii) and Proposition \ref{p:factor}.
\item[(x)] The case $n=1$ is given by Proposition \ref{p:tildesum}. A straightforward induction argument then gives the general result.
\end{itemize}
\end{proof}

\begin{coro} Let $I$, $J$ be ideals of $L$. 
\begin{itemize}
\item[(i)] If $I\subseteq \phi(\tilde{N}_{\infty}(L))$ then $\tilde{N}_{\infty}(L/I)=\tilde{N}_{\infty}(L)/I$.
\item[(ii)] If $\tilde{N}_{\infty}(L)$ is nilregular the $N(\tilde{N}_{\infty}(L))=N(L)$ and $\tilde{N}_{\infty}(L)$ is an ideal of $L$.
\item[(iii)] If $N^*(L)$ is nilregular then $N^*(L) \subseteq \tilde{N}_{\infty}(L)$.
\item[(iv)] If $\tilde{N}_{\infty}(L)$ is nilregular and $\phi(\tilde{N}_{\infty}(L)=0$ then $\tilde{N}_{\infty}(L)=N^*(L)$.
\item[(v)] If $N^*(L)$ is nilregular then $C_L(\tilde{N}_{\infty}(L))=Z(N(L))$.
\item[(vi)] If $F$ has characteristic zero, then $\tilde{N}_{\infty}(I)\subseteq \tilde{N}_{\infty}(L)$.
\item[(vii)] If $F$ has characteristic zero, then $\tilde{N}_{\infty}(L)+I/I\subseteq \tilde{N}_{\infty}(L/I)$;
\item[(viii)] If $L=I\oplus J$ then $\tilde{N}_{\infty}(L)=\tilde{N}_{\infty}(I)\oplus \tilde{N}_{\infty}(J)$.
\end{itemize}
\end{coro}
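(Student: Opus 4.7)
The plan is to reduce each of the eight assertions to the corresponding item of Proposition \ref{p:prop} by exploiting the fact that, since $L$ is finite-dimensional, the descending chain $L = \tilde{N}_0(L) \supseteq \tilde{N}_1(L) \supseteq \ldots$ stabilizes after finitely many steps. Fix $n_0$ such that $\tilde{N}_\infty(L) = \tilde{N}_n(L)$ for all $n \geq n_0$; in the parts that also involve $I$, $J$ or $L/I$, fix analogous stabilization indices and let $n$ exceed all of them.

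First I would dispatch (ii), (iii), (v), (vi), (vii) and (viii) directly. For (ii), $\tilde{N}_\infty(L) = \tilde{N}_{n_0}(L)$ is nilregular, so Proposition \ref{p:prop}(iv) yields $N(\tilde{N}_\infty(L)) = N(L)$ and the claim that it is an ideal of $L$. For (iii), any $n \geq n_0$ gives $N^*(L) \subseteq \tilde{N}_n(L) = \tilde{N}_\infty(L)$ by Proposition \ref{p:prop}(v). Part (v) follows at once from Proposition \ref{p:prop}(vii) with $n \geq n_0$. For (vi), (vii) and (viii), I choose $n$ at least as large as the stabilization indices for each of the algebras involved, then apply Proposition \ref{p:prop}(viii), (ix) and (x) respectively. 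For (iv), since $\tilde{N}_{n_0}(L) = \tilde{N}_\infty(L)$ is nilregular with trivial Frattini ideal, Proposition \ref{p:prop}(vi) gives $\tilde{N}_\infty(L) = \tilde{N}_{n_0+1}(L) = N^*(L)$.

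The only part needing a small additional step is (i). Given $I \subseteq \phi(\tilde{N}_\infty(L))$, note that $\phi(\tilde{N}_\infty(L)) = \phi(\tilde{N}_n(L))$ for all $n \geq n_0$, so Proposition \ref{p:prop}(i) gives $\tilde{N}_{n+1}(L/I) = \tilde{N}_{n+1}(L)/I = \tilde{N}_\infty(L)/I$ for every $n \geq n_0$. To pass to the limit in the quotient, observe that $\tilde{N}_\infty(\tilde{N}_\infty(L)) = \tilde{N}_\infty(L)$, so one further application of Proposition \ref{p:prop}(i) (to the algebra $\tilde{N}_\infty(L)$ with the same ideal $I \subseteq \phi(\tilde{N}_\infty(L))$) shows that $\tilde{N}(\tilde{N}_\infty(L)/I) = \tilde{N}_\infty(L)/I$, so the sequence $\tilde{N}_m(L/I)$ is constant from $m = n_0 + 1$ onwards, giving $\tilde{N}_\infty(L/I) = \tilde{N}_\infty(L)/I$.

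There is no real obstacle here beyond this bookkeeping: the substantive content was already established in Proposition \ref{p:prop}, and the corollary is essentially the observation that taking $n \to \infty$ is harmless once one picks $n$ past the stabilization point of every algebra in sight.
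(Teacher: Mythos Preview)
Your proposal is correct and matches the paper's approach: the paper states this corollary without proof, treating it as an immediate consequence of Proposition~\ref{p:prop}, and your argument simply makes explicit the passage to the stabilized index. One small remark on part~(i): the extra paragraph is unnecessary, since once Proposition~\ref{p:prop}(i) gives $\tilde{N}_{n+1}(L/I)=\tilde{N}_\infty(L)/I$ for every $n\ge n_0$, the sequence $\tilde{N}_m(L/I)$ is already visibly constant from $m=n_0+1$ on, and hence equals $\tilde{N}_\infty(L/I)$ without further work.
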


If $S$ is a subalgebra of $L$ the {\em core} of $S$, $S_L$, is the biggest ideal of $L$ contained in $S$. The following is an analogue of a result for groups given by Vasil'ev et al. in \cite{vvs}.

\begin{theor}\label{t:phi} Let $L$ be a Lie algebra over any field. Then the core of the intersection of all maximal subalgebras such that $L=M+\tilde{N}(L)$ is equal to $\phi(L)$.
\end{theor}
\begin{proof} Put $P$ equal to the intersection of all maximal subalgebras such that $L=M+\tilde{N}$. Clearly $\tilde{N}\not \subseteq \phi(L)$ and $\phi(L) \subseteq P_L$. Factor out $\phi(L)$ and suppose that $P_L\neq 0$. Let $A$ be a minimal ideal of $L$ contained in $P_L$. Then $A \subseteq \tilde{N}(L)$. 
\par

Since $\phi(L)=0$ there is a maximal subalgebra of $L$ such that $A\not \subseteq M$. If $L=\tilde{N}(L)+M$ we have $A\subseteq P_L\subseteq M$, a contradiction. If not, then $A\subseteq \tilde{N}(L)\subseteq M$, a contradiction again. Hence $P_L=0$.
\par

It follows that $P_L\subseteq \phi(L)$, whence the result.

\end{proof}

\end{document}